\documentclass[final,leqno]{siamltex}
\synctex=1

\usepackage[hidelinks,pagebackref=true,pdfauthor={Andrew}]{hyperref}

\usepackage{cite}

\usepackage{xcolor}
\definecolor{dark-red}{rgb}{0.4,0.15,0.15}
\definecolor{dark-blue}{rgb}{0.15,0.15,0.4}
\definecolor{medium-blue}{rgb}{0,0,0.5}
\hypersetup{
    colorlinks, linkcolor={blue},
    citecolor={blue}, urlcolor={dark-red}
}

\usepackage{mathptmx}
\usepackage{float}
\usepackage[section,subsection]{extraplaceins}

\usepackage{amssymb}
\usepackage{amsmath}
\usepackage{graphicx}
\usepackage{enumerate}
\usepackage{subfigure}

\usepackage{color}
\usepackage{wrapfig}
\usepackage{bigints}
\usepackage{stmaryrd}

\usepackage[final, noinline]{fixme}

\raggedbottom


\def\smartqed{}
\def\qed{}

\def\norm#1{\|#1\|}

\def\R{\mathbb{R}} 
\def\T{\mathbb{T}} 
\def\P{\mathbb{P}}

\usepackage{mathtools}
\usepackage[hidelinks]{hyperref}
\makeatletter
\newcommand{\mytag}[2]{%
  \text{#1}%
  \@bsphack
  \begingroup
    \@onelevel@sanitize\@currentlabelname
    \edef\@currentlabelname{%
      \expandafter\strip@period\@currentlabelname\relax.\relax\@@@%
    }%
    \protected@write\@auxout{}{%
      \string\newlabel{#2}{%
        {#1}%
        {\thepage}%
        {\@currentlabelname}%
        {\@currentHref}{}%
      }%
    }%
  \endgroup
  \@esphack
}
\makeatother

\let\norm\relax
\newcommand{\Norm}[1]{\left\lVert #1 \right\rVert}
\newcommand{\norm}[1]{\left\lvert #1 \right\rvert}

\newcommand\restr[2]{{
  \left.\kern-\nulldelimiterspace 
  #1 
  \vphantom{\big|} 
  \right|_{#2} 
  }}

\newtheorem{remark}[theorem]{Remark}


\title{Homogenization of lateral diffusion on a random surface}


\author{A. B. Duncan}

\begin{document}

\maketitle

\begin{abstract}
	We study the problem of lateral diffusion on a static, quasi-planar surface generated by a stationary, ergodic random field possessing rapid small-scale spatial fluctuations.  The aim is to study the effective behaviour of a particle undergoing Brownian motion on the surface viewed a projection on the underlying plane.  By formulating the problem as a diffusion in a random medium, we are able to use known results from the theory of stochastic homogenization of SDEs to show that, in the limit of small scale fluctuations, the diffusion process behaves quantitatively like a Brownian motion with constant diffusion tensor $D$.  While $D$ will not have a closed-form expression in general,  we are able to derive variational bounds for the effective diffusion tensor, and using a duality transformation argument, obtain a closed form expression for $D$ in the special case where $D$ is isotropic.  We also describe a numerical scheme for approximating the effective diffusion tensor numerically and illustrate this scheme with two examples.
\end{abstract}

\begin{keywords}
	stochastic homogenisation, random media, Laplace-Beltrami, diffusion, multiscale-analysis.
\end{keywords}

\begin{AMS}
	35Q92, 60H30, 35B27
\end{AMS}

\pagestyle{myheadings}
\thispagestyle{plain}
\markboth{A. B. DUNCAN}{Homogenization of lateral diffusion on a random surface} 

\section{Introduction}
\label{sec:introduction}
Lateral diffusion of particles along interfaces is a frequently occurring phenomenon in cellular biology.  In the case of lipid bilayer membranes,  the lipid molecules and integral membrane proteins which constitute the cell membrane themselves undergo lateral diffusion as a result of thermal agitation \cite{almeida1995lateral}.   The mobility of membrane proteins has far-reaching implications for many cellular processes, in particular protein transport, signalling and morphology \cite{axelrod1976mobility, alberts2000molecular, bressloff2013stochastic},  thus there has been considerable interest in measuring how the mobility of these proteins is affected by the membrane and its surrounding environment.  The dynamics of protein diffusion within a fluid membrane was first considered by Saffman and Delbr\"{u}ck \cite{saffman1975brownian} who proposed a continuum hydrodynamic model for a laterally diffusing particle in a flat, homogeneous fluid membrane.  The model predicted a relationship for the diffusion tensor of a particle in the membrane in terms of the particle radius, the thickness and viscosity of the membrane and the viscosity of the bulk medium.  Continuum models for studying the influence of shape fluctuations of the membrane on the macroscopic protein diffusion rate were subsequently considered in \cite{gustafsson1997diffusion, granek1997semi, naji2007diffusion,reister2007lateral} in which the proteins undergo Brownian motion laterally along an infinitesimally thin two dimensional surface embedded in $\R^3$.  The equilibrium fluctuations of the surface are characterised by the Canhan-Helfrich Hamiltonian \cite{helfrich1973elastic,canham1970minimum}.  In \cite{naji2007diffusion}, an expression for the effective diffusion tensor was derived by considering the joint Markov process for the coupled particle and surface, applying an adiabatic elimination to average out the rapid temporal surface fluctuations. Another approach, based on a path-integral formulation was considered in \cite{reister2007lateral}.  More recently, in \cite{duncan2013multiscale}, a multiscale approach to the problem of lateral diffusion on rapidly fluctuating surfaces was adopted and expressions for the macroscopic diffusion tensor were derived rigorously,  firstly on a static surface with periodic undulations,  and subsequently on a surface possessing both rapid spatial and temporal oscillations. 
\\\\
In this paper we build on the work of \cite{duncan2013multiscale}, moving on from the case of periodic media to allow random surface fluctuations generated by a stationary, spatially ergodic random field. For simplicity, we restrict our attention to the static membrane model.  The resulting model is very general and thus applicable to a wide variety of surfaces containing inhomogeneities and micro-structure, making the approach especially attractive for biological applications.  By viewing the system as diffusion in a  random medium, under reasonable assumptions, it is possible to apply stochastic homogenization methods to  derive expressions for the effective diffusion tensor for such a model,  and in many cases, obtain a closed form expression.  To our knowledge, the study of lateral diffusion on random surfaces with spatially ergodic fluctuations has not been considered previously, either analytically or numerically.  The novelty of this paper thus lies in the application of standard results from the theory of stochastic homogenization and random media to analyse the dynamics of this model rather than any particular mathematical result.
\\\\
Consider a random field $h^\epsilon(x)$ which describes the surface fluctuations about the plane,  where the small scale parameter $\epsilon \ll 1$ controls the small scale amplitude and wavelength of the surface fluctuations.   In Section \ref{sec:model} we will describe the model for lateral diffusion of a particle on this rapidly-fluctuating, random surface. Moreover, we will show that the trajectory $X^\epsilon(t)$ of such a particle can be described by an It\^{o} SDE with rapidly varying, random coefficients and with a singularly perturbed drift term.  The problem of identifying the macroscopic behaviour of the projected trajectory $X^\epsilon(t)$ in the limit as $\epsilon \rightarrow 0$ is a homogenization problem.
\\\\
Homogenization of parabolic and elliptic problems with random, stationary coefficients has been widely studied, both from a PDE perspective  \cite{papanicolaou1979boundary, papanicolaou1982diffusions} as well as from a probabilistic perspective \cite{kipnis1986central, de1989invariance, osada1983homogenization}.  In this paper we approach this problem probabilistically, and in Section \ref{sec:formulation} we formulate this system as a stochastic homogenization problem using the framework of \cite{komorowski2012fluctuations}.  Using methods from the theory of stochastic homogenization for SDEs one can then identify the limiting behaviour of the diffusion process.  Indeed, in Section \ref{sec:homogenization}, under certain reasonable assumptions on the surface fluctuations, we show that, in the limit of vanishing $\epsilon$, the evolution of the particle is well approximated by a pure diffusion process on the plane with constant effective diffusion tensor $D$. 
\\\\
As in the periodic case,  $D$ will not generally have a closed form in two dimensions.  In Section \ref{sec:properties} we show that it is possible to generalize the results of \cite[Proposition 2]{duncan2013multiscale} and express $D$ in terms of a variational minimisation problem, from which Voigt-Reuss variational bounds \cite{zhikov1994homogenization} on $D$ can be derived.  In Section \ref{sec:area_scaling}, by generalising the results of \cite[Section 5.3]{duncan2013multiscale} we apply a duality transformation argument to show that for two-dimensional surfaces, if $D$ is isotropic then it is equal to $\frac{1}{Z}$ where $Z$ is the average  surface area of the random surface with respect to its projection on the plane. This is a generalisation of the \emph{area scaling approximation} described in \cite{gustafsson1997diffusion,naji2009hybrid,king2004apparent,gov2006diffusion}.  Moreover, we identify a natural sufficient condition for the effective diffusion tensor $D$ to be isotropic. In particular, we show that it is sufficient for the random field to be isotropic itself for the area scaling approximation to hold.
\\\\
When the effective diffusion tensor is not isotropic, then one must resort to numerical methods to compute $D$.  Unlike in the periodic case \cite[Section 5]{duncan2013multiscale}, the expression for the effective diffusion tensor is not amenable to direct numerical approximation.   In Section \ref{sec:numerical_methods} we describe a well-known approach to computing the effective diffusion tensor via a periodization approximation \cite{owhadi2003approximation, bourgeat2004approximations, alexanderian2012homogenization}. This allows us to approximate the solution of the infinite cell problem with the solution of a periodic cell problem over a suitably large domain in $\R^d$.  
\\\\
We apply this scheme to two particular examples. First we consider a random protrusion model, where the random surface is generated by randomly placed protrusions,  whose position is determined by a Poisson point process. This model falls under the framework discussed in the previous sections, and we demonstrate that the area scaling approximation holds for this example. In the second example we consider lateral diffusion on a random surfaces defined by the graph of sufficiently smooth Gaussian random field. Due to the unboundedness of the fluctuations this example will not fall under the above theory, however, numerical simulations suggest that homogenization limit does appear to exist for this particular model, and moreover, the area scaling approximation holds all the same. 
\\\\
In Section \ref{sec:conclusion} we provide concluding remarks as well as suggestions for future avenues of research.

\section{Model}
\label{sec:model}
In this section we introduce the model for lateral diffusion on a rough, random surface.  For simplicity, we will we restrict our attention to surfaces $S$ which can be expressed as the graph of a sufficiently smooth function $h:\R^d \rightarrow \R$ known in the biology literature as the \emph{Monge gauge} for  $S$.  We will model the rough interface as a surface $S^\epsilon$ consisting of low amplitude, high frequency undulations about the plane.  More specifically, for a small scale parameter $\epsilon \ll 1$, we consider a surface $S_h^\epsilon$ with Monge Gauge:
\begin{equation}
	\label{eq:h_eps}
	h^\epsilon(x) = \epsilon h\left(\frac{x}{\epsilon}\right), \qquad \mbox{ for } x \in \R^d,
\end{equation}
so that
$$S^\epsilon_h = \left\lbrace \left(x,  h^\epsilon\left(x\right)\right) \, \Big| \, x \in \R^d 	\right\rbrace.$$
\\
We assume that the function $h(x)$ is a random field with measure $\P$ having mean $0$ and being stationary, that is, having two-point covariance function of the form:
$$\mathbb{E}_{\P}\left[h(x)h(y)\right]  = C(x-y), \qquad x, y \in \R^d,$$
for some positive function $C$.  Moreover, we will assume that the random field is ergodic with respect to spatial translations, so that expectations with respect to $\P$ can be replaced by spatial averages.  Finally, we assume that realisations of $h(x)$ are $\P$-almost surely bounded with (sufficiently many) bounded derivatives (which precludes the possibility of Gaussian random fields).
\\\\
In local coordinates,  the surface $S^\epsilon_h$ has metric tensor $g^\epsilon(x, h) = g(x/\epsilon, h)$, where $g$ is given by:
\begin{equation}
	\label{eq:metric_tensor}
	g(x, h) = I + \nabla h(x) \otimes \nabla h(x), 	\qquad x \in \R^d,
\end{equation}
and the infinitesimal surface element is given by $\sqrt{\norm{g}\left(\frac{x}{\epsilon}\right)}$ where $\norm{g}$ denotes the determinant of $g$.  Since the random field is ergodic with respect to spatial translations the average surface area $Z$ can be written as
\begin{equation}
	\label{eq:Z}
	Z := \mathbb{E}_{\mathbb{P}}\left[\sqrt{\norm{g}(x,h)}\right] = \lim_{R\rightarrow \infty} \frac{1}{(2R)^d}\int_{[-R,R]^d} \sqrt{\norm{g}(x,h)} \, dx.
\end{equation}
\fxnote{Should I add a plot here of increasingly rescaled functions long with the arc-length to emphasise this result, or is the text self-explanatory?}
In particular,  for fixed $\epsilon$ the average surface area is given by
\begin{equation*}
	\lim_{R\rightarrow \infty} \frac{1}{(2R)^d}\int_{[-R,R]^d} \sqrt{\norm{g^\epsilon}(x,h)} \, dx = \lim_{R\rightarrow \infty} \left(\frac{2R}{\epsilon}\right)^{-d}\int_{\left[-R/\epsilon,R/\epsilon\right]^d} 
\sqrt{\norm{g}(y, h)} \, dy  = Z.
\end{equation*}
This implies that as $\epsilon\rightarrow 0$, the surface area is conserved, which suggests that (\ref{eq:h_eps}) is the natural scaling for this problem. This is illustrated for the 1D case in Figure \ref{fig:one_dim}, which plots a realisation of the surface generated by a Gaussian random field $h^\epsilon(x)$.  The arc-length of the surface over $[-R,R]$, for $R \gg 1$, is  approximately  $2R Z$. Consider the projected trajectory of a particle undergoing lateral diffusion on $S^\epsilon_h$ starting from $0$. The escape time of the process from $[-R,R]$ is equal to the expected escape time of a free $\R$-valued Brownian motion from the interval $[-RZ,RZ]$ which is $\frac{R^2Z^2}{2}$.  Taking $\epsilon \rightarrow 0$, the expected escape time remains $\frac{R^2Z^2}{2}$ in the limit, which implies that the law of the lateral diffusion process behaves identically to a free Brownian motion on $\R$ with constant diffusion coefficient $\frac{1}{Z^2}$ . We note that any other scaling would result in the surface area going to $0$ or $\infty$ as $\epsilon \rightarrow 0$.  It follows that the scaling given in (\ref{eq:h_eps}) preserves the average surface area which suggests that (\ref{eq:h_eps}) is the correct scaling for this problem.
\begin{figure}[ht]
\includegraphics[width=\textwidth]{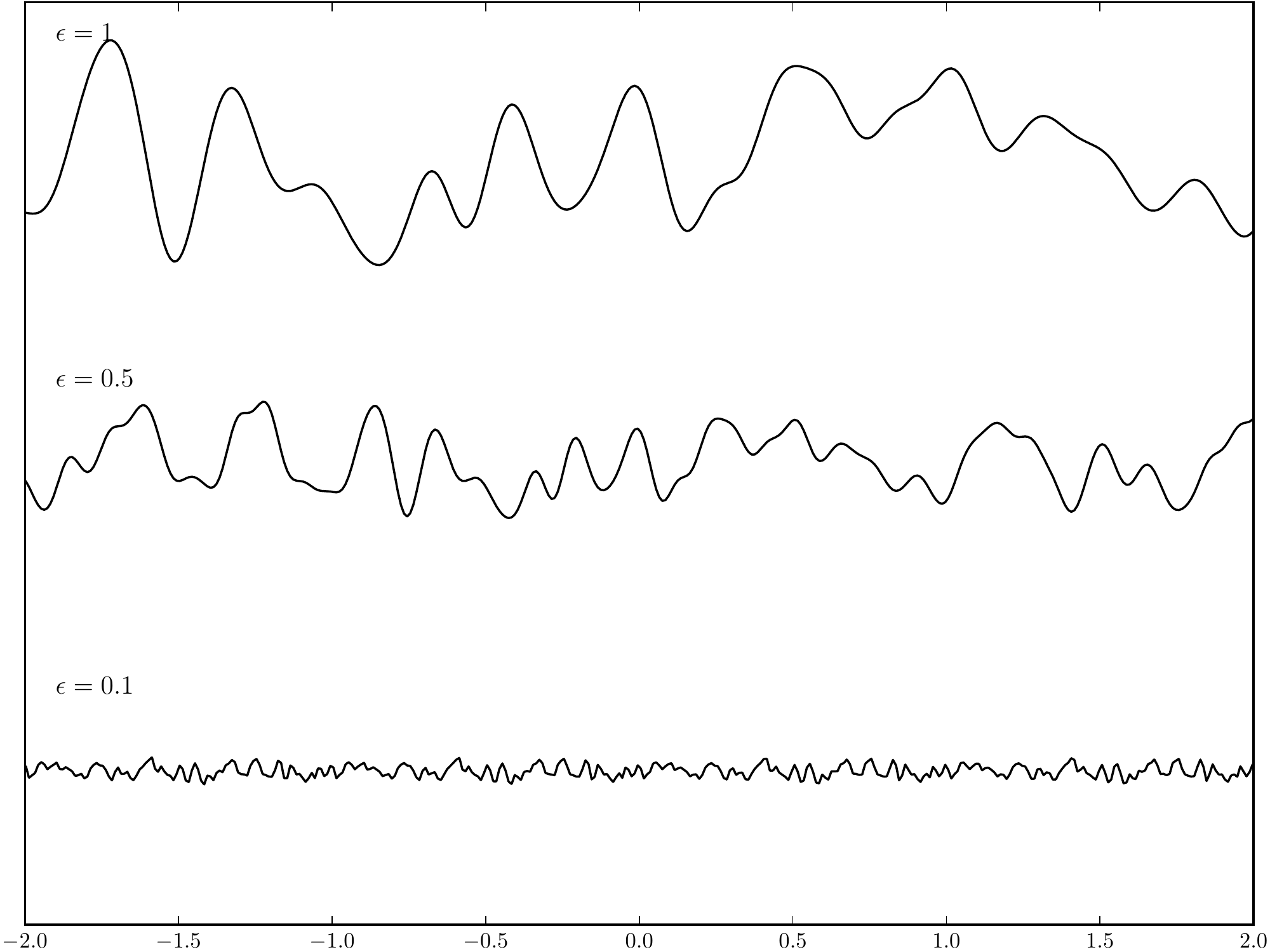}
\caption[Realisation of the 1D surface generated by a Gaussian random field]{Realisation of the 1D surface generated by a Gaussian random field $h^\epsilon$ for increasingly small values of $\epsilon$.  As $\epsilon \rightarrow 0$, the average arclength $Z$ is remains constant at around $6.5$. }
\label{fig:one_dim}
\end{figure}

Consider a particle diffusing along the surface $S_h^\epsilon$ and let  $X^\epsilon(t)$ denote the position of the particle at time time given in local coordinates, with $X^\epsilon(0) = x$. Then, the evolution of $X^\epsilon(t)$ is described by the following It\^{o} SDE
\begin{equation}
	\label{eq:ergodic_sde}
	dX_h^\epsilon(t) = \frac{1}{\epsilon}F(X_h^\epsilon(t)/\epsilon,h) \,dt + \sqrt{2\Sigma(X_h^\epsilon(t)/\epsilon,h)}\,dB(t),
\end{equation}
where 
\begin{equation}
	\label{eq:drift}
	F(x,h) = \frac{1}{\sqrt{\norm{g}(x,h)}}\nabla_x\cdot\left(\sqrt{\norm{g}(x,h)}g^{-1}(x,h)\right),
\end{equation}
and \begin{equation}
	\label{eq:diffusion}
	\Sigma(x,h) = g^{-1}(x,h).
\end{equation}
\\
Equivalently,  consider  an observable $u_h^\epsilon(x, t)$ of $X^\epsilon_h(t)$ defined by
$$
	u_h^\epsilon(x, t) = \mathbb{E}\left[u( X^\epsilon_h(t) \, | \, X^\epsilon_h(0) = x\right],
$$
where $u \in C_b(\R^d)$.  Then $u_h^\epsilon(x,t)$ satisfies the following backward Kolmogorov equation \cite[Chapter 6]{friedman1975stochastic}:
\begin{equation}
\label{eq:ergodic_pde}
\begin{aligned}
	\frac{\partial u_h^\epsilon(x,t)}{\partial t} &= \mathcal{L}_h^\epsilon u_h^\epsilon(x,t), 	\quad &(x,t) \in \R^d \times (0,T], \\
	u_h^\epsilon(x, t ) &= u(x),	\quad &(x,t) \in \R^d\times\lbrace 0 \rbrace.
\end{aligned}
\end{equation}
where 
\begin{equation}
	\mathcal{L}_h^\epsilon f(x) = \frac{1}{\sqrt{\norm{g}(x/\epsilon,h)}}\nabla_x\cdot\left(\sqrt{\norm{g}(x/\epsilon,h)}g^{-1}(x/\epsilon)\nabla_x f(x)\right),
\end{equation} 
\\
Our objective is to show that as $\epsilon \rightarrow 0$,  the process $X_h^\epsilon(t)$ behaves quantitively like a Brownian motion with a constant effective diffusion tensor $D$ independent of the particular realisation of $h$.  Equivalently,  we show that $u_h^\epsilon$ converges pointwise to the solution $u^0$ of the PDE

\begin{equation}
\label{eq:ergodic_eff_pde}
\begin{aligned}
	\frac{\partial u^0(x,t)}{\partial t} &= D:\nabla_x\nabla_x u^0(x,t), 	\quad &(x,t) \in \R^d \times (0,T], \\
	u^0(t,x ) &= u(x),	\quad &(x,t) \in \R^d\times\lbrace 0 \rbrace.
\end{aligned}
\end{equation}

\section{Problem Formulation and Set-up}
\label{sec:formulation}
In this section we will rigorously state the assumptions on the random field $h(x)$ which are necessary for the problem to be well-defined and for a homogenization limit of both the SDE (\ref{eq:ergodic_sde}) and the PDE (\ref{eq:ergodic_pde}) to exist.  The approach described here is a direct application of the results in \cite[Chapter 9]{komorowski2012fluctuations},  whose approach we will follow very closely.
\\\\
Let $\Omega$ be the space of all $C^3$ functions from $\R^d$ to $\R$ equipped with the Fr\'{e}chet metric generated by seminorms of the form $$\Norm{f}_{N} =  \displaystyle\sup_{\norm{x} \leq N}\sum_{k \leq 3}\norm{\nabla^{k} f(x)}, \qquad N \in \mathbb{N}.$$  Equipped with this metric, one can show that  $\Omega$ is a Polish space.
\\\\
For $x \in \R^d$, define the translation operator $\tau_x:\Omega \rightarrow \Omega$ by $$\tau_x(h) = h(\cdot + x), \qquad h \in \Omega.$$  Let $\P$ be a Borel probability measure on the measurable space $(\Omega, \mathcal{B}(\Omega))$ and define the group of translations $\lbrace \tau_x \, : \, x \in \R^d \rbrace$.  We assume that the following conditions hold on the random field $h(x)$:
\\
\begin{description}
	\item [\textbf{A}.] $\P\left(\tau^{-1}_x\left(B\right)\right) = \P\left(B\right)$, for all $B \in \mathcal{B}(\Omega)$ and $x \in \R^d$. (Stationarity)
	\item [\textbf{B}.] For  $B \in \mathcal{B}(\Omega)$,  $\tau_x(B) = B$ for all $x \in \R^d$ implies that $\P(B) = 0$ or $\P(B) = 1$, (Ergodicity)
	\item [\textbf{C}.] For all $\delta > 0$, $\lim_{x \rightarrow 0}\P\left[\norm{\tau_x h - h} >  \delta \right] = 0$. (Stochastic Continuity)
\end{description}
Moreover, we shall make the following assumption regarding the derivatives of realisations of $h$:
\begin{description}
	\item [\textbf{D}.] There exists a constant $K > 0$ such that for $\mathbb{P}$-almost surely every realisation of $h \in \Omega$,
	\begin{equation}
	\label{eq:ergodic_bounds_assumption}
		\norm{\nabla h(x)} + \norm{\nabla\nabla h(x)} \leq K,	\qquad x \in \R^d,  \quad \mathbb{P}-\mbox{a.s.}
	\end{equation}
\end{description}
Assumption \textbf{D} is a very restrictive assumption which precludes considering Gaussian random fields,  however,  without this assumption one encounters insurmountable technical problems when attempting to obtain a homogenization result.  
\\\\
We note that the scenario of diffusion on surfaces possessing static, periodic fluctuations, as considered in \cite{duncan2013multiscale} can be expressed in the framework described above.  Indeed, if $h_0 \in C^3(\T^d)$ extended to $\R^d$ by periodicity,  then we can define a random field by $$h(x) = h_0(x + \zeta),$$ where $\zeta$ is distributed according to the Lebesgue measure on $\T^d$.  The corresponding probability measure $\P$  on $\Omega$ clearly satisfies the conditions \textbf{A}-\textbf{D}, and moreover the SDE (\ref{eq:ergodic_sde}) and the PDE (\ref{eq:ergodic_pde}) reduce to their periodic counterparts for the periodic surface map $h_0$.
\\\\
Since Brownian motion is invariant under the diffusive scaling $t \rightarrow t/\epsilon^2$, $x \rightarrow x/\epsilon$ we can express the process $X^\epsilon_h(t)$ in law as
$$X^\epsilon_h(t) = \epsilon X_h\left(\frac{t}{\epsilon^2}\right),$$
where $X_h(t)$ is the solution of the It\^{o} SDE
\begin{equation}
\label{eq:ergodic_sde_rescaled}
	dX_h(t) = {F}(X_h(t), h)dt + \sqrt{2{\Sigma}(X_h(t), h)}\, dB(t),
\end{equation}
where $B(t)$ is a standard $\R^d$-valued Brownian motion.  
\\\\
For a fixed $h \in \Omega$ the infinitesimal generator of $X_h(t)$ is given by
\begin{equation}
	\mathcal{L}_h f = \frac{1}{\sqrt{\norm{{g}}(x, h)}}\nabla_x\cdot\left(\sqrt{\norm{{g}}(x, h)}{g}^{-1}(x, h)\nabla_x f(x)\right),  \qquad	f \in C^2_b(\R^d)
\end{equation}
First, we establish the well-posedness of the SDE for $X_h(t)$:
\\
\begin{proposition}
	Let $X_0$ be a random variable with finite second moments, independent of $B(\cdot)$ and the random field $h(x)$. Then, under assumption (\ref{eq:ergodic_bounds_assumption}), for $\mathbb{P}$-almost every $h \in \Omega$,   the SDE (\ref{eq:ergodic_sde_rescaled}) has a unique strong solution $X_h(t)$ satisfying $X_h(0) = X_0$.  Moreover, the $X_h(t)$ is a Markov diffusion process and possesses a  strictly positive continuous transition density $p(t, x, y, h)$.
\end{proposition}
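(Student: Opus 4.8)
The plan is to fix a realisation $h \in \Omega$ and verify that the coefficients $F(\cdot, h)$ and $\sqrt{2\Sigma(\cdot, h)}$ of (\ref{eq:ergodic_sde_rescaled}) satisfy the hypotheses of the classical strong existence and uniqueness theory, after which the Markov property and the transition density follow from time-homogeneity and uniform ellipticity respectively. Everything rests on the algebraic structure of the metric. Since $g = I + \nabla h \otimes \nabla h$ is a rank-one perturbation of the identity, its eigenvalues are $1$ (with multiplicity $d-1$) and $1 + \norm{\nabla h}^2$; hence by Assumption~\textbf{D} one has $1 \leq \norm{g}(x,h) = 1 + \norm{\nabla h(x)}^2 \leq 1 + K^2$ uniformly in $x$. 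The Sherman--Morrison identity gives $\Sigma = g^{-1} = I - (1 + \norm{\nabla h}^2)^{-1}\,\nabla h \otimes \nabla h$, so $\Sigma$ is bounded and \emph{uniformly elliptic}, with eigenvalues confined to $[(1 + K^2)^{-1}, 1]$.

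I would then deduce well-posedness as follows. Because $\Sigma$ is a smooth function of $\nabla h$ valued in a fixed compact set of uniformly positive-definite matrices, the matrix square root $\sqrt{\Sigma}$ is a smooth function of $\Sigma$ there, and its spatial gradient involves only $\nabla h$ and $\nabla\nabla h$; both being bounded by Assumption~\textbf{D}, the diffusion coefficient $\sqrt{2\Sigma(\cdot, h)}$ is globally Lipschitz and bounded. The drift $F$, obtained by differentiating $\sqrt{\norm{g}}\,g^{-1}$, likewise depends only on $\nabla h$ and $\nabla\nabla h$ and is therefore bounded, while for fixed $h \in C^3(\R^d)$ it is continuously differentiable and hence locally Lipschitz. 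Local Lipschitz continuity delivers pathwise uniqueness and local existence, and the boundedness of both coefficients supplies the linear-growth estimate that precludes explosion; together these yield, for $\P$-a.e.\ $h$, a unique global strong solution with $X_h(0) = X_0$. The Markov property is immediate from the time-homogeneity of the coefficients.

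For the transition density I would invoke classical parabolic theory. The generator $\mathcal{L}_h$ is a divergence-form, uniformly elliptic operator with bounded coefficients, so the associated backward Kolmogorov equation possesses a fundamental solution $p(t,x,y,h)$ which, by the De~Giorgi--Nash--Moser estimates (or the parametrix construction of \cite{friedman1975stochastic}), is continuous on $(0,\infty) \times \R^d \times \R^d$. Strict positivity then follows either from the strong parabolic maximum principle or, quantitatively, from Aronson's Gaussian lower bound, which applies precisely on account of the uniform ellipticity established above.

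I expect the one genuine subtlety to reside in the drift. As $F$ already contains second derivatives of $h$, a naive insistence on a \emph{global} Lipschitz bound would require uniform control of $\nabla^3 h$, which Assumption~\textbf{D} does not provide and which a generic $C^3$ realisation need not possess. The resolution to emphasise is that no such control is needed: boundedness of $F$ by itself rules out explosion, while the local Lipschitz continuity guaranteed by $h \in C^3$ already secures pathwise uniqueness, so the argument closes without any bound on third derivatives.
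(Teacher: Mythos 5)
Your proposal is correct and follows essentially the same route as the paper, which simply cites the classical existence, uniqueness, Markov-property and positive-transition-density theorems of Friedman \cite{friedman1975stochastic}; you have in effect verified the hypotheses of exactly those theorems (uniform ellipticity and boundedness via Assumption~\textbf{D} and the rank-one structure of $g$, local Lipschitz continuity from $h \in C^3$). Your closing remark on the drift is well taken: boundedness plus local Lipschitz continuity is precisely what the cited theory requires, so no control of $\nabla^3 h$ is needed.
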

\begin{proof}
Result follows from a direct application of Theorems 2.2, 3.6, 4.3 and    6.4.6 of \cite{friedman1975stochastic}.\\
\end{proof}

\section{The Homogenization Result}
\label{sec:homogenization}
In this section we state the homogenization result for the SDE (\ref{eq:ergodic_sde}) and PDE (\ref{eq:ergodic_pde}) making use of standard stochastic homogenization techniques such as \cite{bensoussan1978asymptotic,kipnis1986central,komorowski2012fluctuations}.  The approach adopted here closely follows that of \cite{komorowski2012fluctuations}.   The assumption that the random field $h$ is stationary and ergodic with respect to spatial translations is essential to obtaining a limiting diffusion process in the limit as $\epsilon \rightarrow 0$. To obtain such a homogenization limit, we need to express  SDE (\ref{eq:ergodic_sde_rescaled}) in terms of a stationary ergodic Markov process.  Following the work of  \cite{kipnis1986central,de1989invariance,papanicolaou1979boundary}, we considered the so-called \emph{environment viewed from the particle}.
\\\\
We first define the derivatives with respect to the translation group $\lbrace \tau_x \rbrace_{x \in \R^d}$, which are necessary for the formulation of the environment process.   For $i \in \lbrace 1, \ldots, d \rbrace$, let $D_i$ be the $L^2(\P)$ generator of $\tau_x$ in the $e_i$ direction,  that is $$D_i V := \frac{d}{d\lambda}V(\tau_{(\lambda e_i)} h)\big|_{\lambda = 0},$$
in the $L^2(\P)$ sense.  Assumption \textbf{C} permits us to apply Corollary 1.1.6 of \cite{ethier2009markov}, to show that the $\mathcal{D}(D_i)$ are dense in $L^2(\P)$.   Note that $D_i$ is antisymmetric with respect to the $L^2(\P)$ inner product, so that for all $U, V \in \mathcal{D}(D_i) \subset L^2(\P)$,
$$ \langle D_i U, V \rangle_{L^2(\P)} = -\langle U, D_i V \rangle_{L^2(\P)}.$$
For $V \in {H}^1 := \bigcap_{i=1}^d \mathcal{D}(D_i)$, we can then define the gradient to be 
\begin{equation}
\label{eq:ergodic_gradient}
\mathbf{\mathbb{D}} V := \left(D_i V\right)_{i=1}^d.
\end{equation}
For a vector field $\mathbf{V} = (V_i)_{i=1}^d$ such that $V_i \in H_1$ we define the divergence to be 
\begin{equation}
	\label{eq:ergodic_div}
	\mathbb{D}\cdot \mathbf{V} := \sum_{i=1}^d D_i V_i.
\end{equation}

We express the coefficients of the SDE as stationary random variables on $\Omega$.  Abusing notation, we define the random variable $g(h)$ by
$$g(h) := g(0, h) = \restr{I + \nabla h(x)\otimes \nabla h(x)}{x = 0},$$
we can express the coefficients of the SDE (\ref{eq:ergodic_sde_rescaled}) as random variables on $\Omega$.  Indeed, by defining
\begin{equation*} 
\begin{aligned}
F(h) := F(0, h) &= \restr{\frac{1}{\sqrt{\norm{g}(x,h)}}\nabla\cdot\left(\sqrt{\norm{g}(x,h)}g^{-1}(x, h)\right)}{x = 0} \\
		&= \frac{1}{\sqrt{\norm{g}(h)}}\,\mathbb{D}\cdot\left(\sqrt{\norm{g}(h)}g^{-1}(h)\right),
\end{aligned}
\end{equation*}
and,
$$\Sigma(h) := \Sigma(0, h) = g^{-1}(h),$$
we can then express (\ref{eq:ergodic_sde_rescaled}) as
$$
 dX_h(t) = F(\tau_{X_h(t)}h)\,dt + \sqrt{2\Sigma(\tau_{X_h(t)}h)}\,dB(t).
$$
Let $\zeta_h(t)$ be the stochastic process given by
\begin{equation*}
\zeta_h(t) = \begin{cases} \tau_{X_h(t)}h, & \mbox{if } t > 0 \\
			   h & \mbox{if } t = 0. \end{cases} 
\end{equation*}
This stationary, $\Omega$-valued stochastic process known as the \emph{environment viewed from the particle}, and was considered in works such as \cite{kipnis1986central,de1989invariance,papanicolaou1979boundary}.  It describes the evolution of the environment $h$ which is observed from a frame of reference fixed on the particle.  The process $\zeta_h(t)$ is Markovian and possesses an invariant measure $\pi$ absolutely continuous with respect to $\P$.  The particle trajectory $X_h$ is driven by $\zeta_h(t)$, in the sense that we can express $X_h(t)$ in terms of the environment process as follows 
$$
	X_h(t) = \int_0^t F(\zeta_h(s)) ds + \int_0^t \sqrt{2\Sigma(\zeta_h(s))}\, dB(s).
$$
$ $\newline
By assumption (\ref{eq:ergodic_bounds_assumption}) it follows that 
\begin{equation}
\label{eq:ergodic_Z}
Z = \int_{\Omega}\sqrt{\norm{g}(h)}\,\mathbb{P}(dh) = \int_{\Omega} \sqrt{1 + \norm{\nabla h(0)}^2} \, \mathbb{P}(dh) < \infty.
\end{equation}
  Define ${\pi}$ to be the probability measure on $h$ given by
$$
	{\pi}(dh) = \frac{\sqrt{\norm{g}(h)}}{Z}\mathbb{P}(dh)
$$

The following proposition summarises the properties of the environment process $\zeta_h(t)$ required to obtain a invariance principle for $X_h(t)$.  A proof of this result can be found in various places,  in particular  of \cite[Proposition 9.7]{komorowski2012fluctuations}.
\\
\begin{proposition}[Proposition 9.7, \cite{komorowski2012fluctuations}]
	The environment process $\zeta_h(t)$ is Markovian and its transition semigroup $P(t)$ can be written as
\begin{equation}
	\label{eq:ergodic_environment_semigroup}
	P(t)f(h) = \int_{\R^d}p(t,x, 0, h){f}(x, h) \,dx,	\qquad f \in L^\infty(\P)
\end{equation}
which can be extended to a positive preserving contraction semigroup on $L^p(\Omega)$ for any $p\geq 1$.  In particular
	$$\Norm{P(t)f}_{L^p(\pi)} \leq \Norm{f}_{L^p(\pi)},	\qquad f \in L^2(\pi).$$
	Moreover,  $\zeta_h(t)$ possesses an invariant measure $\pi$ with respect to which $\zeta_h(t)$ is reversible and ergodic.  Finally, the domain $C_b^2(\Omega)$ is a core for the $L^2$-generator $\mathcal{L}$ of $P(t)$ and $\mathcal{L}$ is the unique self-adjoint extension of 
	\begin{equation}
	\label{eq:periodic_generator_extension}
	\hat{\mathcal{L}}f = \frac{1}{\sqrt{\norm{g}(h)}}\mathbb{D}\cdot\left(\sqrt{\norm{g}(h)}g^{-1}(h)\mathbb{D} f\right), \qquad \mathcal{D}(\hat{L}) =  C^2_b(h),
	\end{equation}
and we can express the Dirichlet form corresponding to $\mathcal{L}$ as follows
	\begin{equation}
	\label{eq:ergodic_dirichlet_form}
		\langle (-\mathcal{L})f, f \rangle_{L^2(\pi)} = \frac{1}{Z}\int_{\Omega} \mathbb{D} f(h) \cdot g^{-1}(h) \mathbb{D} f(h) \sqrt{\norm{g}(h)} \mathbb{P}(dh).
	\end{equation}
\end{proposition}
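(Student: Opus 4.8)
The plan is to assemble the statement in four stages: (i) the Markov property together with the semigroup representation (\ref{eq:ergodic_environment_semigroup}); (ii) positivity preservation and the $L^p(\pi)$ contraction bound; (iii) reversibility of $\pi$, the explicit form of the generator (\ref{eq:periodic_generator_extension}), and the Dirichlet form (\ref{eq:ergodic_dirichlet_form}); and (iv) ergodicity, with essential self-adjointness being the analytically delicate point.

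First I would establish that $\zeta_h(t) = \tau_{X_h(t)}h$ is Markovian. Since $X_h$ solves (\ref{eq:ergodic_sde_rescaled}) with coefficients that are functions of $\tau_{X_h}h$, the evolution of $\zeta_h$ is autonomous, and the cocycle identity $\tau_{x+y} = \tau_x\tau_y$ transfers the Markov property of $X_h$ to the environment process. The representation (\ref{eq:ergodic_environment_semigroup}) then follows by writing $P(t)f(h) = \mathbb{E}[f(\tau_{X_h(t)}h) \mid X_h(0)=0]$ and integrating against the strictly positive continuous transition density $p(t,x,y,h)$ produced by the well-posedness proposition above. Positivity preservation of $P(t)$ is then immediate from $p>0$.

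Next I would verify that $\pi$ is invariant, and in fact reversible, for $\zeta_h$. The generator $\mathcal{L}_h$ of $X_h$ is the Laplace–Beltrami operator and is therefore symmetric with respect to the Riemannian volume $\sqrt{\norm{g}}\,dx$; pushing this symmetry through the stationarity of $\P$ identifies $\pi(dh) = Z^{-1}\sqrt{\norm{g}(h)}\,\P(dh)$ as the reversible measure. Once invariance is known, the contraction $\Norm{P(t)f}_{L^p(\pi)} \le \Norm{f}_{L^p(\pi)}$ follows from Jensen's inequality applied to the probability kernel $P(t)$ (equivalently, by Riesz–Thorin interpolation between the trivial $L^1(\pi)$ and $L^\infty$ bounds). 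For the generator itself, I would compute directly: for $f,\phi \in C_b^2(\Omega)$, using (\ref{eq:periodic_generator_extension}) and integrating by parts via the antisymmetry of each $D_i$ on $L^2(\P)$ gives $\langle \hat{\mathcal{L}}f, \phi\rangle_{L^2(\pi)} = -Z^{-1}\int_\Omega \sqrt{\norm{g}(h)}\, g^{-1}(h)\,\mathbb{D}f \cdot \mathbb{D}\phi \,\P(dh)$. This expression is symmetric in $f$ and $\phi$, which simultaneously proves symmetry of $\hat{\mathcal{L}}$ and, on setting $\phi=f$, yields the Dirichlet form (\ref{eq:ergodic_dirichlet_form}); Assumption \textbf{D} ensures $\sqrt{\norm{g}}$ and $g^{-1}$ are bounded above and below so that the form is well defined and comparable to the $\mathbb{D}$-energy.

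The main obstacle is upgrading $\hat{\mathcal{L}}$ from merely symmetric to self-adjoint while showing that $C_b^2(\Omega)$ is a core, i.e. essential self-adjointness. Here I would invoke the theory of symmetric Markovian Dirichlet forms: the form computed above is closable and Markovian, its closure generates a self-adjoint positivity-preserving contraction semigroup, and this semigroup is identified with $P(t)$ by matching generators on $C_b^2(\Omega)$; the uniform ellipticity furnished by Assumption \textbf{D} is what makes $\hat{\mathcal{L}}$ essentially self-adjoint on this core, so the self-adjoint extension is unique. Finally, ergodicity of $\zeta_h$ transfers from Assumption \textbf{B}: through the representation (\ref{eq:ergodic_environment_semigroup}), any $P(t)$-invariant function in $L^2(\pi)$ is invariant under the spatial shifts $\{\tau_x\}$, and ergodicity of $\P$ under these shifts forces it to be $\P$-almost surely constant.
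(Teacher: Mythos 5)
The paper itself does not prove this proposition: it is quoted from Proposition 9.7 of \cite{komorowski2012fluctuations}, and the text explicitly defers to that reference. So there is no in-text argument to compare yours against; what follows is an assessment of your sketch on its own terms. Stages (i)--(iii) are sound and follow the standard route of the cited reference: the Markov property of $\zeta_h$ via the cocycle identity and the autonomy of the coefficients, the representation \eqref{eq:ergodic_environment_semigroup} and positivity from the strict positivity of the transition density $p$, the $L^p(\pi)$ contraction from invariance plus Jensen, and the symmetry computation in which the $1/\sqrt{\norm{g}}$ prefactor of $\hat{\mathcal{L}}$ cancels against the density $\sqrt{\norm{g}}/Z$ of $\pi$ with respect to $\P$, so that antisymmetry of each $D_i$ on $L^2(\P)$ yields both reversibility and the Dirichlet form \eqref{eq:ergodic_dirichlet_form}. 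Your ergodicity argument is also correct once routed through the form rather than through the kernel: $P(t)f=f$ forces $\langle(-\mathcal{L})f,f\rangle_{L^2(\pi)}=0$, uniform ellipticity (Assumption \textbf{D}) then forces $\mathbb{D}f=0$, hence $f$ is shift-invariant and Assumption \textbf{B} makes it constant.

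The genuine gap is the core statement. Your argument --- close the form, obtain a self-adjoint Markovian generator, then ``identify it with $P(t)$ by matching generators on $C_b^2(\Omega)$'' --- is circular: two distinct self-adjoint extensions of $\hat{\mathcal{L}}$ can agree on $C_b^2(\Omega)$, and agreement on that set determines the semigroup only if $C_b^2(\Omega)$ is already known to be a core, which is precisely what is to be proved. Closability of a Markovian form produces the Friedrichs extension; it does not give uniqueness of the self-adjoint extension, and uniform ellipticity alone is not a criterion for essential self-adjointness. The missing ingredient is a regularity step: one must show that the resolvent $(\lambda-\mathcal{L})^{-1}$ (or $P(t)$ itself) maps a dense subset of $L^2(\pi)$ into the graph-norm closure of $C_b^2(\Omega)$, using smoothness of $p(t,x,y,h)$ in $x$ and the $C^3$ regularity of realisations of $h$, and then apply the invariant-dense-subspace core criterion. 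Without that step the identification of the $L^2$-generator $\mathcal{L}$ of $P(t)$ with the closure of $\hat{\mathcal{L}}$ --- and hence the validity of \eqref{eq:ergodic_dirichlet_form} for $\mathcal{L}$ rather than for some other extension --- is not established.
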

\smartqed \qed
$ $ \linebreak
We now introduce the spaces $\mathcal{H}_1$ and its dual $\mathcal{H}_{-1}$ as defined in \cite{kipnis1986central} and \cite{de1989invariance}.  Let $\mathcal{H}_1$  be the completion of the space $$\left\lbrace \phi \in C^2_b(\Omega) \, \big| \, \int_{\Omega}\phi(h)\pi(dh) = 0 \mbox{ and }\Norm{\phi}_1 := \langle (-\mathcal{L})\phi, \phi\rangle_{L^2(\pi)} < \infty \right\rbrace,$$ with respect to $\Norm{\cdot}_1$.   The dual space $\mathcal{H}_{-1}$  is the completion of the space
$$\left\lbrace \phi \in C^2_b(\Omega) \, \big| \, \int_{\Omega}\phi(h)\pi(dh) = 0 \mbox{ and }\Norm{\phi}_{-1} < \infty \right\rbrace,$$ 
where the dual norm is given by $$\Norm{\phi}_{-1}^2 = \frac{1}{2}\langle \phi, (-\mathcal{L})^{-1}\phi \rangle_{L^2(\pi)} = \sup_{\psi \in H_1} \left\lbrace 2 \langle \phi, \psi \rangle - \langle (-\mathcal{L})\psi, \psi \rangle \right\rbrace.$$
Note that $\phi \in L^2(\pi)$ lies in $\mathcal{H}_{-1}$ if and only if there exists $C > 0$ such that
$$
	\langle \phi, \psi \rangle_{L^2(\pi)} \leq C \Norm{\psi}_1,
$$
for all $\psi \in \mathcal{H}_1$.   Moreover,  since $\mathcal{L}$ is positive, self-adjoint  $$\mathcal{H}_{1} = \mathcal{D}\left(\left({-L}\right)^{\frac{1}{2}}\right), \qquad \mbox{ and } \qquad \mathcal{H}_{-1} = \mathcal{D}\left(\left(-L\right)^{-\frac{1}{2}}\right).$$
By assumption (\ref{eq:ergodic_bounds_assumption}), the matrix $g(\omega)$ is uniformly elliptic.  This implies that 
\begin{equation*}
K_1\langle \mathbb{D}\phi, \mathbb{D}\phi \rangle_{L^2(\pi)}	\leq \langle \phi, \mathcal{L}\phi \rangle_{L^2(\pi)} \leq K_2\langle \mathbb{D}\phi, \mathbb{D}\phi \rangle_{L^2(\pi)}, \quad \mbox{ for } \phi \in C^1_b(\Omega),
\end{equation*}
for some positive constants $K_1$ and $K_2$.  This implies that there is an isomorphism between the spaces $\mathcal{H}_1$ and $H_1$, and thus, given $\phi \in \mathcal{H}_1$ we are justified in defining the gradient $\mathbb{D} \phi \in L^2(\Omega)$.
\\\\
Following the typical approach adopted in the homogenization of SDEs we wish to decompose the singularly perturbed drift term into a martingale and a remainder term which vanishes as $\epsilon \rightarrow 0$ and then apply the Martingale Central Limit Theorem \cite{helland1982central} to obtain convergence to a limiting Brownian motion.   Unlike in the periodic case,  due to the lack of a spectral gap (or equivalently of a Poincar\'{e} inequality) for $\mathcal{L}$, the Poisson problem $-\mathcal{L}\chi = F$ will not be well posed.  However, since the resolvent of $P(t)$ in $L^2(\pi)$ is $(0,\infty)$, for a fixed unit vector $e \in \R^d$ and $\lambda > 0$,  we can consider the following resolvent equation for $\chi^e \in L^2(\pi)$:
	\begin{equation}
		\label{eq:ergodic_cell_lambda}
		\left(\lambda I - \mathcal{L}\right) \chi^e = F^{e},
	\end{equation}
where $F^e = F\cdot e$.
\\
\begin{lemma}
	For any unit vector $e \in \R^d$, $$F^e \in L^2(\pi)\cap \mathcal{H}_{-1}$$
\end{lemma}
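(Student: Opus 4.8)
The plan is to verify the two memberships separately: that $F^e \in L^2(\pi)$ is essentially immediate from the uniform derivative bounds of Assumption \textbf{D}, while $F^e \in \mathcal{H}_{-1}$ follows from an integration-by-parts identity combined with the Dirichlet form \eqref{eq:ergodic_dirichlet_form}.

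For $F^e \in L^2(\pi)$, I would first note that Assumption \textbf{D} forces $I \leq g(h) = I + \nabla h(0)\otimes\nabla h(0) \leq (1+K^2)I$, so that $\sqrt{\norm{g}(h)} = \sqrt{1+\norm{\nabla h(0)}^2}$ is bounded above and bounded below away from zero and $g^{-1}(h)$ is bounded. Since $F$ is assembled from $\sqrt{\norm{g}}$, $g^{-1}$ and a single application of the translation derivatives $D_j$ --- which, expanded, involve only $\nabla h(0)$ and $\nabla\nabla h(0)$ --- the a.s. bound $\norm{\nabla h(x)} + \norm{\nabla\nabla h(x)} \leq K$ produces a deterministic constant $C = C(K,d)$ with $\norm{F^e(h)} \leq \norm{F(h)} \leq C$ for $\mathbb{P}$-a.e. $h$. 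As $\pi$ is a probability measure, this gives $F^e \in L^\infty(\pi) \subset L^2(\pi)$.

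For the harder claim $F^e \in \mathcal{H}_{-1}$, I would use the dual characterisation recorded after the definition of $\mathcal{H}_{-1}$: it suffices to find a constant $C$ with $\langle F^e, \psi\rangle_{L^2(\pi)} \leq C\Norm{\psi}_1$ for every $\psi$ in the core $C_b^2(\Omega)$. Setting $A(h) = \sqrt{\norm{g}(h)}\,g^{-1}(h)$, the definition of $F$ reads $\sqrt{\norm{g}}\,F = \mathbb{D}\cdot A$, and substituting the density $\sqrt{\norm{g}}/Z$ of $\pi$ gives $\langle F^e, \psi\rangle_{L^2(\pi)} = Z^{-1}\int_\Omega \left(e\cdot \mathbb{D}\cdot A\right)\psi\,\mathbb{P}(dh)$; here the weight $\sqrt{\norm{g}}$ of $\pi$ cancels the prefactor $1/\sqrt{\norm{g}}$ in $F$ exactly, which is the crux of the computation. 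I would then integrate by parts entry-by-entry using the antisymmetry of each $D_j$ with respect to $\langle\cdot,\cdot\rangle_{L^2(\mathbb{P})}$ to transfer the derivative onto $\psi$, arriving at $\langle F^e,\psi\rangle_{L^2(\pi)} = -\int_\Omega e\cdot g^{-1}(h)\,\mathbb{D}\psi(h)\,\pi(dh)$.

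It then remains to control this integral by $\Norm{\psi}_1$. Using that $g^{-1}(h)$ is symmetric positive definite, I would apply Cauchy--Schwarz pointwise in the $g^{-1}$ inner product, $\norm{e\cdot g^{-1}\mathbb{D}\psi} \leq (e\cdot g^{-1}e)^{1/2}(\mathbb{D}\psi\cdot g^{-1}\mathbb{D}\psi)^{1/2}$, invoke the elementary bound $e\cdot g^{-1}(h)e \leq \norm{e}^2 = 1$ (which holds since $g^{-1}(h) \leq I$), and then Cauchy--Schwarz in $L^2(\pi)$. The surviving quadratic factor is precisely $\int_\Omega \mathbb{D}\psi\cdot g^{-1}\mathbb{D}\psi\,\pi(dh) = \langle(-\mathcal{L})\psi,\psi\rangle_{L^2(\pi)} = \Norm{\psi}_1^2$ by \eqref{eq:ergodic_dirichlet_form}, so $\norm{\langle F^e,\psi\rangle_{L^2(\pi)}} \leq \Norm{\psi}_1$ and hence $F^e \in \mathcal{H}_{-1}$ with $\Norm{F^e}_{-1} \leq 1$. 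The main point requiring care is the legitimacy of the entry-wise integration by parts: I would check that each entry $A_{ij} = \sqrt{\norm{g}}(g^{-1})_{ij}$ is bounded and lies in $\mathcal{D}(D_j)$ so that the antisymmetry relation applies with no boundary term, extending the identity from the core $C_b^2(\Omega)$ by density.
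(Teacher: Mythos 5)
Your proposal is correct and follows essentially the same route as the paper: uniform boundedness of $F$ via Assumption \textbf{D} for the $L^2(\pi)$ claim, and for the $\mathcal{H}_{-1}$ claim the antisymmetry of $D_j$ to move the divergence onto $\psi$ (with the $\sqrt{\norm{g}}$ weight of $\pi$ cancelling the $1/\sqrt{\norm{g}}$ prefactor), followed by Cauchy--Schwarz and the bound $e\cdot g^{-1}(h)e\leq 1$ to conclude $\Norm{F^e}_{-1}\leq 1$. Your explicit attention to justifying the entry-wise integration by parts on the core $C_b^2(\Omega)$ is a point the paper passes over silently, but it is not a different argument.
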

\begin{proof}
	To show that $F^e \in L^2(\pi)$, we note that
	$$ \norm{F(h)^e}  =  \norm{F(h)\cdot e} \leq C \norm{\nabla\nabla h(x)}_2,$$
	which is bounded almost surely, by assumption (\ref{eq:ergodic_bounds_assumption}).  To show that $F^e \in \mathcal{H}_{-1}$ we first note that the centering condition holds, so that
	$$ \int_{\Omega} F^e(h) \pi(dh) = 0. $$
Let $\psi \in \mathcal{H}_1$, then
	\begin{equation*}
	\begin{split}
		\langle F^e, \psi \rangle_{L^2(\pi)} &= \frac{1}{Z}\int_{\Omega}  e\cdot g^{-1}(h) \mathbb{D} \psi(h) \sqrt{\norm{g}(h)} \mathbb{P}(dh) \\
	&  \leq \left( \frac{1}{Z} \int_{\Omega} e \cdot g^{-1}(h) e \sqrt{\norm{g}(h)}\mathbb{P}(dh) \right)^{\frac{1}{2}} \Norm{\phi}_{\mathcal{H}_1} \\
	& \leq  \Norm{\phi}_{\mathcal{H}_1}.
	\end{split}
	\end{equation*}
It follows that $F^e \in \mathcal{H}_{-1}$ with $\Norm{F^e}_{\mathcal{H}^{-1}} \leq 1.$
\smartqed \qed
\end{proof}
$ $\linebreak
The $\lambda$-corrector $\chi_\lambda^e$ can be written as $$\chi^e_\lambda(h) = \int_0^\infty e^{-\lambda t}P(t) F^e(h),$$
and so by the contractivity of $P(t)$ we have that $\Norm{\chi_\lambda^e}_{L^2(\pi)} \leq \frac{1}{\lambda}\Norm{F^e}_{L^2(\pi)}$.
  Moreover,  taking the inner product of (\ref{eq:ergodic_cell_lambda}) with $\chi^e_\lambda$ we have that
\begin{equation*}
\begin{split}
  \lambda \Norm{\chi_\lambda^e}^2_{L^2(\pi)} + \Norm{\chi_\lambda^e}^2_{\mathcal{H}_1} 
 = \langle F, \chi^e_\lambda\rangle \leq \Norm{F}_{\mathcal{H}^{-1}}\Norm{\chi^e_\lambda}_{\mathcal{H}^1},	
\end{split}
\end{equation*}
so that that $\Norm{(\lambda I- \mathcal{L})^{-1}F^e}_{\mathcal{H}_1} \leq \Norm{\chi_\lambda^e}_{\mathcal{H}_{-1}}$.   Consequently,  we can extend the resolvent operator $(\lambda - \mathcal{L})^{-1}$ from $L^2(\pi)$ to a bounded operator from $\mathcal{H}_{-1}$ to $\mathcal{H}_{1}$.  To be able to obtain a central limit theorem one must show that the $\lambda$-correctors decay suitably fast in $L^2(\pi)$ as $\lambda \rightarrow 0$ and that $\chi^\lambda_e$ converges to an element in $\mathcal{H}_1$.  These two results are typically the core of any invariance principle for additive functionals of Markov processes.
\\
\begin{lemma}[\cite{kipnis1986central}, \cite{de1989invariance}]
	\label{lem:chi_lambda_limit}
	There exists $\chi^e \in \mathcal{H}_1$ such that 
	\begin{equation}
	\label{eq:ergodic_relation2}
		\lim_{\lambda \rightarrow 0}\Norm{\chi^e_\lambda - \chi^e}_{\mathcal{H}_1} =0 ,
	\end{equation}
and 
\begin{equation}
	\label{eq:ergodic_relation1}
	\lim_{\lambda\rightarrow 0 }\lambda \langle \chi^e_\lambda, \chi_\lambda^e \rangle_{L^2(\pi)} = 0
\end{equation}
\smartqed \qed
\end{lemma}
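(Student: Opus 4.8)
The plan is to run the classical Kipnis--Varadhan resolvent argument, exploiting the fact established in Proposition 9.7 that $\mathcal{L}$ is self-adjoint and nonpositive on $L^2(\pi)$, so that the spectral theorem applies to $-\mathcal{L}$. Let $\{E_\mu\}_{\mu\geq 0}$ be the spectral resolution of $-\mathcal{L}$ and let $\rho$ be the (finite, since $F^e\in L^2(\pi)$) spectral measure associated to $F^e$, characterised by $\langle F^e, \varphi(-\mathcal{L})F^e\rangle_{L^2(\pi)} = \int_0^\infty \varphi(\mu)\,d\rho(\mu)$ for bounded Borel $\varphi$. The solution of the resolvent equation (\ref{eq:ergodic_cell_lambda}) is $\chi^e_\lambda = (\lambda-\mathcal{L})^{-1}F^e$, which corresponds spectrally to applying $(\lambda+\mu)^{-1}$ to $F^e$, so that the two quantities of interest become
$$\lambda\langle \chi^e_\lambda,\chi^e_\lambda\rangle_{L^2(\pi)} = \int_0^\infty \frac{\lambda}{(\lambda+\mu)^2}\,d\rho(\mu), \qquad \Norm{\chi^e_\lambda}_{\mathcal{H}_1}^2 = \int_0^\infty \frac{\mu}{(\lambda+\mu)^2}\,d\rho(\mu).$$

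The decisive observation is that the membership $F^e\in\mathcal{H}_{-1}$, established in the preceding lemma, reads in this language as $\int_0^\infty \mu^{-1}\,d\rho(\mu) = 2\Norm{F^e}_{\mathcal{H}_{-1}}^2 < \infty$; in particular $\rho$ assigns no mass to the origin. This integrable function $\mu^{-1}$ will serve as the dominating function throughout. For the limit (\ref{eq:ergodic_relation1}), the integrand satisfies $\lambda(\lambda+\mu)^{-2}\to 0$ as $\lambda\to 0$ for each fixed $\mu>0$, while $\lambda(\lambda+\mu)^{-2}\leq (\lambda+\mu)^{-1}\leq \mu^{-1}$ uniformly in $\lambda$; since $\rho(\{0\})=0$, dominated convergence yields $\lambda\langle \chi^e_\lambda,\chi^e_\lambda\rangle_{L^2(\pi)}\to 0$.

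For the $\mathcal{H}_1$-convergence (\ref{eq:ergodic_relation2}), I would identify the candidate limit as $\chi^e := (-\mathcal{L})^{-1}F^e$, interpreted spectrally through $(-\mathcal{L})^{1/2}\chi^e = (-\mathcal{L})^{-1/2}F^e$; this lies in $\mathcal{H}_1 = \mathcal{D}((-\mathcal{L})^{1/2})$ precisely because $\Norm{(-\mathcal{L})^{-1/2}F^e}_{L^2(\pi)}^2 = \int_0^\infty \mu^{-1}\,d\rho(\mu) < \infty$. One then computes
$$\Norm{\chi^e_\lambda - \chi^e}_{\mathcal{H}_1}^2 = \int_0^\infty \mu\left(\frac{1}{\lambda+\mu}-\frac{1}{\mu}\right)^2 d\rho(\mu) = \int_0^\infty \frac{\lambda^2}{(\lambda+\mu)^2\mu}\,d\rho(\mu),$$
whose integrand tends to $0$ pointwise on $(0,\infty)$ and is dominated by $\mu^{-1}\in L^1(d\rho)$; a final application of dominated convergence closes the argument. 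Equivalently, the same estimate shows $\{\chi^e_\lambda\}$ is Cauchy in $\mathcal{H}_1$ as $\lambda\to 0$, and the limit exists by completeness of $\mathcal{H}_1$.

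The argument is essentially soft once the spectral set-up is in place; the step I would be most careful about is the behaviour of $\rho$ near $\mu = 0$, namely verifying that $\rho(\{0\})=0$ so that $\mu^{-1}$ is genuinely $\rho$-integrable and serves as a legitimate dominating function. This is exactly the point at which the $\mathcal{H}_{-1}$ bound of the preceding lemma, rather than mere $L^2(\pi)$-membership of $F^e$, is indispensable: it is the centering of $F^e$ together with ergodicity (which forces the kernel of $\mathcal{L}$ to consist only of constants) that removes any atom at the origin and makes the whole Kipnis--Varadhan machinery go through.
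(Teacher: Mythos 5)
Your argument is correct and is precisely the classical Kipnis--Varadhan spectral-calculus proof for reversible (self-adjoint) generators that the paper invokes by citation to \cite{kipnis1986central} and \cite{de1989invariance} rather than reproducing; the key inputs you use --- self-adjointness of $\mathcal{L}$ from Proposition 9.7, the $\mathcal{H}_{-1}$ bound on $F^e$ giving $\int_0^\infty \mu^{-1}\,d\rho(\mu)<\infty$, and dominated convergence --- are exactly the ones the cited argument rests on. Your closing remark correctly isolates the one delicate point (no atom of $\rho$ at $\mu=0$, guaranteed by the $\mathcal{H}_{-1}$ membership), so there is nothing to add.
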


We can now state the homogenization theorem for $X^\epsilon_h(t)$.  The proof is a straightforward extension of the arguments given in \cite{kipnis1986central} or \cite{de1989invariance}.  An equivalent, but far more general, approach can be found in \cite{komorowski2012fluctuations}.  As in \cite{gonzalez2008first} we use the convention that $\left(\mathbb{D}\chi\right)_{ij} = D_j \chi_i$.
\\
\fxnote{This is the main theorem of the paper!}
\begin{theorem}
\label{thm:ergodic_homog_thm}
	Suppose that  conditions \textbf{A-D} hold. Then, the process $X_h^\epsilon(t)$ converges weakly in $C([0,T]; \R^d)$ to a Brownian motion with constant diffusion tensor $D$ given by:	
	\begin{equation}
	\label{eq:ergodic_eff_diff}
		D =  \frac{1}{Z}\int_{\Omega}\left(I + \mathbb{D} \chi(h)\right) g^{-1}(h)\left(I + \mathbb{D} \chi(h)\right)^\top \sqrt{\norm{g}(h)}\, \mathbb{P}(dh),
	\end{equation}
where $\chi = (\chi^{e_i})_{i=1,\ldots,d}$ is the $\mathcal{H}_1$ limit of $(\chi_{\lambda}^{e_i})_{i=1,\ldots, d}$ which exists by Lemma \ref{lem:chi_lambda_limit}.
\\
\smartqed \qed
\end{theorem}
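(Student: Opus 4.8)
The plan is to follow the martingale (corrector) method of Kipnis and Varadhan \cite{kipnis1986central, de1989invariance}, in the form systematized in \cite{komorowski2012fluctuations}. By the diffusive scaling relation $X_h^\epsilon(t) = \epsilon X_h(t/\epsilon^2)$ it suffices to prove an invariance principle for the rescaled additive functional $\epsilon X_h(\cdot/\epsilon^2)$, where by (\ref{eq:ergodic_sde_rescaled}) and the representation in terms of the environment process we have, for each unit vector $e \in \R^d$,
\[
e\cdot X_h(t) = \int_0^t F^e(\zeta_h(s))\,ds + \int_0^t e\cdot\sqrt{2\Sigma(\zeta_h(s))}\,dB(s).
\]
The second term is already a square-integrable martingale additive functional of the reversible, ergodic environment process $\zeta_h$; the sole obstruction is the drift $\int_0^t F^e(\zeta_h(s))\,ds$, which after diffusive rescaling need not converge on its own. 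First I would regularize it using the $\lambda$-corrector $\chi_\lambda^e$ solving the resolvent equation (\ref{eq:ergodic_cell_lambda}).

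Next I would carry out the corrector decomposition. Applying the It\^{o} formula for the environment process, whose $L^2(\pi)$ generator is $\mathcal{L}$, to $\chi_\lambda^e(\zeta_h(t))$ and substituting $\mathcal{L}\chi_\lambda^e = \lambda\chi_\lambda^e - F^e$ from (\ref{eq:ergodic_cell_lambda}) yields
\[
\int_0^t F^e(\zeta_h(s))\,ds = \lambda\int_0^t \chi_\lambda^e(\zeta_h(s))\,ds - \bigl(\chi_\lambda^e(\zeta_h(t)) - \chi_\lambda^e(\zeta_h(0))\bigr) + M_t^{\lambda,e},
\]
where $M^{\lambda,e}$ is a square-integrable martingale. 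Combining $M^{\lambda,e}$ with the diffusion martingale gives a single martingale additive functional with integrand $e + \mathbb{D}\chi_\lambda^e$; using the Dirichlet-form identity (\ref{eq:ergodic_dirichlet_form}) for the carr\'{e} du champ of $\mathcal{L}$, its predictable quadratic variation is $\int_0^t a_\lambda^e(\zeta_h(s))\,ds$ with $a_\lambda^e = 2(e+\mathbb{D}\chi_\lambda^e)\cdot g^{-1}(e+\mathbb{D}\chi_\lambda^e)$.

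I would then pass to the limit in two stages. For fixed $\lambda$, the diffusive rescaling together with Birkhoff's ergodic theorem for the stationary ergodic process $\zeta_h$ shows that the rescaled quadratic variation converges to $t\int_\Omega a_\lambda^e\,d\pi$, and the martingale central limit theorem \cite{helland1982central} gives weak convergence of the corrected process to a Brownian motion with covariance $D_\lambda$. The two correction terms must be shown negligible in the scaling limit: stationarity and the $L^2(\pi)$ bound $\Norm{\chi_\lambda^e}_{L^2(\pi)}\le\lambda^{-1}\Norm{F^e}_{L^2(\pi)}$ control the boundary term $\epsilon\chi_\lambda^e(\zeta_h(t/\epsilon^2))$, while the estimate (\ref{eq:ergodic_relation1}), $\lambda\langle\chi_\lambda^e,\chi_\lambda^e\rangle_{L^2(\pi)}\to 0$, controls the term $\epsilon\lambda\int_0^{t/\epsilon^2}\chi_\lambda^e\,ds$ via a maximal inequality. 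Finally, letting $\lambda\to 0$ and using the $\mathcal{H}_1$-convergence $\chi_\lambda^e\to\chi^e$ from Lemma \ref{lem:chi_lambda_limit}, I would identify $D_\lambda\to D$; recalling $\pi(dh) = Z^{-1}\sqrt{\norm{g}(h)}\,\mathbb{P}(dh)$, the limit of $\tfrac12\int a^e\,d\pi$ over the directions $e = e_i$ reproduces the stated formula (\ref{eq:ergodic_eff_diff}). The full vector-valued statement follows by applying the martingale central limit theorem to the $\R^d$-valued corrected martingale, or equivalently by the Cram\'{e}r--Wold device.

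The main obstacle is precisely the absence of a spectral gap (Poincar\'{e} inequality) for $\mathcal{L}$, noted before (\ref{eq:ergodic_cell_lambda}): since the Poisson equation $-\mathcal{L}\chi = F^e$ need not be solvable in $L^2(\pi)$, one cannot decompose the drift into a stationary martingale plus an exact boundary term in a single step. The delicate point is therefore the interchange of the two limits $\epsilon\to 0$ and $\lambda\to 0$, which rests entirely on the two Kipnis--Varadhan estimates of Lemma \ref{lem:chi_lambda_limit}: the $\mathcal{H}_1$-convergence guarantees that the limiting corrector $\chi^e$, and hence $D$, are well defined and that the rescaled quadratic variations converge, while (\ref{eq:ergodic_relation1}) guarantees that the vanishing-$\lambda$ remainder does not contribute. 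Establishing tightness of the family in $C([0,T];\R^d)$, e.g.\ via the Aldous or Kolmogorov criterion using the martingale structure and the uniform $L^2$ estimates, is the remaining routine but necessary ingredient.
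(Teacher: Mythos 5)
Your proposal is the standard Kipnis--Varadhan corrector/martingale argument (resolvent regularization, martingale decomposition, the two estimates of Lemma \ref{lem:chi_lambda_limit} to control the remainders, martingale CLT plus tightness), which is precisely the route the paper takes: it gives no independent proof and simply invokes the arguments of \cite{kipnis1986central,de1989invariance} and \cite{komorowski2012fluctuations}. The decomposition, the identification of the limiting variance via the Dirichlet form, and the role of (\ref{eq:ergodic_relation1}) and the $\mathcal{H}_1$-convergence all match the cited framework, so your outline is correct and essentially identical in approach.
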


\begin{corollary}
	Let $u^\epsilon(t,x ,h)$ be the solution to the Kolmogorov backward equation (\ref{eq:ergodic_pde}), with initial condition $v \in C_b(\R^d)$, independent of $\epsilon$.  Then 
	\begin{equation}
	\label{eq:ergodic_kbe_limit}
		\lim_{\epsilon\rightarrow 0}\mathbb{E}_{\mathbb{P}}\norm{u_h^\epsilon(t,x) - u^0(t,x)} = 0, \qquad	\mbox{ for all } (t,x)\in [0,T] \times \R^d,
	\end{equation}
where $u^0:[0,T]\times \R^d \rightarrow \R^d$ is the solution of 
\begin{equation}
	\label{eq:ergodic_limiting_pde}
	\frac{\partial u^0(t,x)}{\partial t} = D:\nabla \nabla u^0(t,x), 	\qquad (t,x ) \in [0,T]\times \R^d,
\end{equation}
where $D$ is given by (\ref{eq:ergodic_eff_diff}).
\end{corollary}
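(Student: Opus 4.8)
The plan is to deduce the convergence (\ref{eq:ergodic_kbe_limit}) of the PDE solutions directly from the process-level weak convergence established in Theorem \ref{thm:ergodic_homog_thm}, using the stochastic representation of $u^\epsilon_h$. First I would record that, by the backward Kolmogorov representation introduced in Section \ref{sec:model}, for each fixed realisation $h$ the solution of (\ref{eq:ergodic_pde}) may be written as
\begin{equation*}
	u^\epsilon_h(t,x) = \mathbb{E}\left[ v\left(X^\epsilon_h(t)\right) \mid X^\epsilon_h(0) = x \right],
\end{equation*}
the expectation being taken over the driving Brownian motion $B$ only, so that $u^\epsilon_h(t,x)$ is a random variable measurable with respect to $h$. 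Likewise, the solution of the limiting equation (\ref{eq:ergodic_limiting_pde}) is $u^0(t,x) = \mathbb{E}\left[ v\left(X^0(t)\right) \mid X^0(0) = x \right]$, where $X^0$ denotes the Brownian motion with diffusion tensor $D$ furnished by Theorem \ref{thm:ergodic_homog_thm}.

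Next I would fix $t \in [0,T]$ and consider the evaluation functional $\Phi : C([0,T];\R^d) \rightarrow \R$ given by $\Phi(\omega) = v(\omega(t))$. Since the time-$t$ evaluation map is continuous on $C([0,T];\R^d)$ and $v \in C_b(\R^d)$, the functional $\Phi$ is bounded and continuous, hence an admissible test functional for weak convergence. Applying the portmanteau theorem to the weak convergence $X^\epsilon_h \Rightarrow X^0$ supplied by Theorem \ref{thm:ergodic_homog_thm} then gives $\mathbb{E}[\Phi(X^\epsilon_h)] \rightarrow \mathbb{E}[\Phi(X^0)]$, that is,
\begin{equation*}
	u^\epsilon_h(t,x) \longrightarrow u^0(t,x) \qquad \text{as } \epsilon \rightarrow 0,
\end{equation*}
for $\P$-almost every $h$. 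To upgrade this pointwise (in $h$) convergence to the $L^1(\P)$ statement (\ref{eq:ergodic_kbe_limit}), I would use the uniform bound $\norm{u^\epsilon_h(t,x)} \leq \Norm{v}_{L^\infty(\R^d)}$, valid for all $\epsilon$, $x$ and $h$ by the representation above (equivalently by the maximum principle for (\ref{eq:ergodic_pde})). As the dominating constant $\Norm{v}_{L^\infty(\R^d)}$ is trivially $\P$-integrable, the dominated convergence theorem applied under $\mathbb{E}_{\mathbb{P}}$ yields (\ref{eq:ergodic_kbe_limit}) for the fixed $(t,x)$, and since $(t,x)$ was arbitrary the claim follows.

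The delicate point, and the main obstacle, lies in the mode of convergence asserted in Theorem \ref{thm:ergodic_homog_thm}. The argument above requires the weak convergence of $X^\epsilon_h$ to hold quenched, i.e. for $\P$-almost every realisation $h$, whereas the Kipnis--Varadhan martingale decomposition underlying the theorem most naturally produces an annealed invariance principle, in which the environment is initialised from the invariant measure $\pi$. Bridging this gap is where the structure of the problem enters: assumption \textbf{D} forces $1 \leq \sqrt{\norm{g}(h)} \leq \sqrt{1+K^2}$, so that the density $d\pi/d\P = \sqrt{\norm{g}(h)}/Z$ is bounded above and away from zero, and consequently $L^1(\P)$ and $L^1(\pi)$ convergence coincide. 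Combined with the reversibility and ergodicity of the environment process $\zeta_h$ (\cite[Proposition 9.7]{komorowski2012fluctuations}), this permits one to replace the quenched statement by a self-averaging argument, showing that the $\P$-variance of $u^\epsilon_h(t,x)$ vanishes as $\epsilon \rightarrow 0$; together with convergence of the $\P$-mean this delivers the $L^1$ limit even when only the annealed principle is at hand.
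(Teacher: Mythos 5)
Your argument is correct and is exactly the route the paper intends: the corollary is stated without proof as an immediate consequence of Theorem \ref{thm:ergodic_homog_thm}, via the probabilistic representation $u^\epsilon_h(t,x)=\mathbb{E}\left[v(X^\epsilon_h(t))\mid X^\epsilon_h(0)=x\right]$, the boundedness and continuity of the time-$t$ evaluation functional on $C([0,T];\R^d)$, and dominated convergence in $h$ using the bound $\Norm{v}_{L^\infty}$. Your closing discussion of the quenched versus annealed mode of convergence is a genuine subtlety that the paper glosses over entirely, and your observation that assumption \textbf{D} makes $d\pi/d\mathbb{P}$ bounded above and away from zero (so that $L^1(\pi)$ and $L^1(\P)$ convergence coincide, and the Kipnis--Varadhan annealed statement can be upgraded to the $L^1(\P)$ claim by combining convergence of the mean with vanishing variance) is the correct way to reconcile the two.
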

\smartqed \qed

\section{Properties of the Effective Diffusion Tensor}
\label{sec:properties}
In this section we study the properties of the effective diffusion tensor $D$ given by (\ref{eq:ergodic_eff_diff}).  For one dimensional surfaces, one can show that:
$$\mathbb{D}\chi(h) = \frac{\sqrt{\norm{g}(h)}}{Z} - 1,$$ so that $D = \frac{1}{Z^2}$,  where $Z$ is the average excess surface area given by (\ref{eq:Z}).  This generalises the corresponding result for lateral diffusion on a raplidy fluctuating periodic surfaces considered in \cite{duncan2013multiscale}.  As in the periodic case,  in two-dimensions or more it is not generally possible to obtain a closed form expression for $\mathcal{D}\chi$ and hence $D$.  This is compounded by the fact that $\chi$ is obtained as the limit of the $\lambda$-correctors $\chi_\lambda$ in the abstract space $\mathcal{H}_1$.
\\\\
In this section we show that $\mathcal{D}\chi$ can be expressed as the unique weak solution of a variational problem in $\left(L^2(\Omega)\right)^d$, and that the effective diffusion tensor $D$ can be identified as the minimum value of quadratic functional over the space of mean-zero, curl-free vector functions of $\Omega$.  Using this variational formulation one can easily obtain bounds on the effective diffusion tensor.  By considering the dual minimisation problem one can also obtain lower bounds for $D$. The approach taken here follows the exposition given in  \cite[Chapter 10]{komorowski2012fluctuations}.
\\\\
Denote by $(L^2(\mathbb{P}))^d$ the space of $\R^d$-valued functions of $\Omega$ with components in $L^2(\mathbb{P})$, equipped with the inner product
$$
	\langle U, V \rangle = \sum_{i=1}^d \langle U_i, V_i \rangle_{\mathbb{P}}.
$$
The gradient operator $\mathbb{D}$ defined in (\ref{eq:ergodic_gradient}) maps $H_1$ into $(L^2(\mathbb{P}))^d$.   Define $L^2_{pot}(\mathbb{P})$ to be the range of $\mathbb{D}$ in $(L^2(\mathbb{P}))^d$.   Let $L^2_c(\mathbb{P})$ be the space of constant vector fields in $(L^2(\mathbb{P}))^d$, that is
$$
	L_c^2(\mathbb{P}) = \mbox{span}\lbrace e_i  \, | \,i=1,\ldots,d \rbrace,
$$
where $e_i$ is the $i^{th}$ coordinate basis element of $\R^d$.  Finally, define $L^2_{div}(\mathbb{P})$ to be the orthogonal complement of $L_c^2(\P) \oplus L_{pot}^2(\P)$ in $(L^2(\mathbb{P}))^d$, so that we obtain the following Helmholtz decomposition
	$$ (L^2(\mathbb{P}))^d = L^2_{pot}(\P)\oplus L^2_{div}(\P)\oplus L^2_c(\P).$$
The space $L_{div}^2(\P) \oplus L^2_c{(\mathbb{P})}$ can be interpreted as the space of divergence-free vector fields with square integrable components.  The following result shows that $\mathbb{D}\chi$ can be expressed as the unique weak solution of a cell equation posed in $\left(L^2(\P) \right)^d$.  Note that in the case where the fluctuations are periodic this reduces to the ``periodic" cell problem.
\\
\begin{proposition}
	For any $e \in \R^d$ such that $\norm{e} = 1$,  $V = \mathbb{D} \chi^{e}$ is the unique solution of the problem
	\begin{equation}
	\label{eq:ergodic_cell_eqn}
		\begin{split}
		 V &\in L^2_{pot}(\P), \\
		\sqrt{\norm{g}(h)} g^{-1}(h)\left(e + V(h)\right) &\in L^2_{div}(\P).\\
		\end{split}
	\end{equation}
\end{proposition}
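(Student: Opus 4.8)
The plan is to obtain the two conditions by passing to the limit $\lambda\to 0$ in a weak reformulation of the resolvent equation (\ref{eq:ergodic_cell_lambda}), and to establish uniqueness from the coercivity provided by the uniform ellipticity of $g^{-1}$ under assumption (\ref{eq:ergodic_bounds_assumption}). First I would test (\ref{eq:ergodic_cell_lambda}) against an arbitrary $\phi\in C^2_b(\Omega)$ in $L^2(\pi)$. Rewriting $\langle(-\mathcal{L})\chi^e_\lambda,\phi\rangle_{L^2(\pi)}$ by means of the polarised Dirichlet form (\ref{eq:ergodic_dirichlet_form}), and rewriting $\langle F^e,\phi\rangle_{L^2(\pi)}$ through the integration-by-parts identity established in the proof of the preceding lemma (which transfers $\mathbb{D}$ onto $\phi$), the resolvent equation takes the weak form
\begin{equation*}
\lambda\langle\chi^e_\lambda,\phi\rangle_{L^2(\pi)} + \frac{1}{Z}\int_\Omega \left(e + \mathbb{D}\chi^e_\lambda(h)\right)\cdot g^{-1}(h)\,\mathbb{D}\phi(h)\,\sqrt{\norm{g}(h)}\,\mathbb{P}(dh) = 0,
\end{equation*}
valid for every $\phi\in C^2_b(\Omega)$.

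Next I would send $\lambda\to 0$. The scalar prefactor vanishes, since $\lambda\,\norm{\langle\chi^e_\lambda,\phi\rangle_{L^2(\pi)}}\le \sqrt{\lambda}\,\big(\sqrt{\lambda}\,\Norm{\chi^e_\lambda}_{L^2(\pi)}\big)\,\Norm{\phi}_{L^2(\pi)}$ and $\lambda\langle\chi^e_\lambda,\chi^e_\lambda\rangle_{L^2(\pi)}\to 0$ by (\ref{eq:ergodic_relation1}). For the bilinear term I would combine (\ref{eq:ergodic_relation2}) with the norm equivalence $\Norm{\cdot}_{\mathcal{H}_1}^2\asymp\Norm{\mathbb{D}\cdot}_{L^2(\pi)}^2$ (a consequence of the ellipticity bounds with constants $K_1,K_2$) and the fact that the density $\tfrac{\sqrt{\norm{g}}}{Z}$ is bounded above and below under assumption (\ref{eq:ergodic_bounds_assumption}), so $L^2(\pi)$ and $L^2(\mathbb{P})$ carry equivalent norms; this upgrades the $\mathcal{H}_1$-convergence of the correctors to strong convergence $\mathbb{D}\chi^e_\lambda\to\mathbb{D}\chi^e$ in $(L^2(\mathbb{P}))^d$. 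Passing to the limit yields
\begin{equation*}
\int_\Omega \sqrt{\norm{g}(h)}\,g^{-1}(h)\left(e + \mathbb{D}\chi^e(h)\right)\cdot \mathbb{D}\phi(h)\,\mathbb{P}(dh) = 0, \qquad \phi\in C^2_b(\Omega).
\end{equation*}
Since $\chi^e\in\mathcal{H}_1$, the isomorphism $\mathcal{H}_1\cong H_1$ gives $V:=\mathbb{D}\chi^e\in L^2_{pot}(\mathbb{P})$, which is the first condition. As $\{\mathbb{D}\phi:\phi\in C^2_b(\Omega)\}$ is dense in $L^2_{pot}(\mathbb{P})$, the displayed identity states precisely that the flux $\sqrt{\norm{g}}\,g^{-1}(e+V)$ is orthogonal to $L^2_{pot}(\mathbb{P})$, i.e.\ it is divergence-free in the sense of the Helmholtz decomposition; this is the second condition of (\ref{eq:ergodic_cell_eqn}).

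For uniqueness, suppose $V_1,V_2$ both solve (\ref{eq:ergodic_cell_eqn}) and set $W:=V_1-V_2\in L^2_{pot}(\mathbb{P})$. Subtracting the second conditions, the field $\sqrt{\norm{g}}\,g^{-1}W$ is orthogonal to $L^2_{pot}(\mathbb{P})$, so pairing it with $W\in L^2_{pot}(\mathbb{P})$ gives
\begin{equation*}
\int_\Omega \sqrt{\norm{g}(h)}\,W(h)\cdot g^{-1}(h)\,W(h)\,\mathbb{P}(dh) = 0.
\end{equation*}
Uniform ellipticity ($g^{-1}(h)\ge (1+K^2)^{-1}I$ and $\sqrt{\norm{g}(h)}\ge 1$) forces the nonnegative integrand to vanish, whence $W=0$ $\mathbb{P}$-almost surely and $V_1=V_2$.

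The main obstacle is the limit passage in the second paragraph: one must simultaneously dispose of the singular prefactor $\lambda\langle\chi^e_\lambda,\phi\rangle_{L^2(\pi)}$ and promote the abstract $\mathcal{H}_1$-convergence of the $\lambda$-correctors to strong convergence of their gradients in $(L^2(\mathbb{P}))^d$. Both steps lean essentially on Lemma \ref{lem:chi_lambda_limit} and on the equivalence of the $\mathcal{H}_1$-, $H_1$- and $L^2(\mathbb{P})$-norms granted by assumption (\ref{eq:ergodic_bounds_assumption}); once these are secured, the identification with the Helmholtz decomposition and the coercivity argument for uniqueness are routine.
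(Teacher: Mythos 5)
Your argument is correct, and it is the standard one: the paper itself gives no proof of this proposition (it is stated bare, with the work deferred to \cite[Chapter 10]{komorowski2012fluctuations}), and your route --- testing the resolvent equation against $C^2_b(\Omega)$, killing the $\lambda$-term via (\ref{eq:ergodic_relation1}), upgrading $\mathcal{H}_1$-convergence of $\chi^e_\lambda$ to $L^2(\mathbb{P})$-convergence of the gradients via the ellipticity-induced norm equivalence, and then reading off the two conditions plus coercivity for uniqueness --- is exactly how that reference proceeds. One small point worth recording: since $\tfrac{1}{Z}\int_\Omega\sqrt{\norm{g}}\,g^{-1}(e+V)\,\mathbb{P}(dh)=De\neq 0$, the flux has a nonzero constant component, so with the paper's definition of $L^2_{div}(\P)$ as the orthogonal complement of $L^2_c(\P)\oplus L^2_{pot}(\P)$ the second condition should literally read $\sqrt{\norm{g}}\,g^{-1}(e+V)\in L^2_{div}(\P)\oplus L^2_c(\P)$ (equivalently, orthogonality to $L^2_{pot}(\P)$, which is precisely what you prove and all that the uniqueness argument and the variational formula require).
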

$ $\\
Analogously to the corresponding result for periodic surface fluctuations, given in \cite{duncan2013multiscale}, $D$ can be expressed as the minimum of a particular quadratic functional.   Indeed, if $e \in \R^d$ is a unit vector,  then the macroscopic rate of diffusion in the direction $e$ can be written as 
	\begin{equation}
		\label{eq:ergodic_minimisation}
		e \cdot D e = \frac{1}{Z}\inf_{V \in L^2_{pot}(\P)} \int_{\Omega} \left(e  + V(h)\right)\cdot g^{-1}(h)\left(e  + V(h)\right)\sqrt{\norm{g}(h)}\P(dh).
	\end{equation}
This can be seen by noting that that the weak cell equation (\ref{eq:ergodic_cell_eqn}) is the Euler-Lagrange equation (\ref{eq:ergodic_minimisation}), and that $\mathbb{D} \chi^e$ is the unique minimiser of this variational problem.  In particular,  by substituting $V = 0$ we obtain a (rough) upper bound for the effective diffusion tensor.
\\\\
One can also obtain a lower bound for $D$ simply by extending the domain over which (\ref{eq:ergodic_minimisation}) to $L^2_{pot}(\P) \oplus L^2_{div}(\P)$,  in particular:
$$e \cdot D e \geq \frac{1}{Z}\inf_{\substack{ V \in (L^2(h))^d, \\ \int V \P(dh) = 0}}\int_{\Omega} \left(e  + V(h)\right)\cdot g^{-1}(h)\left(e  + V(h)\right)\sqrt{\norm{g}(h)}\P(dh).$$
This minimisation problem can be solved directly to obtain a closed-form expression for the minimum value, giving the following lower bound.
$$
 e\cdot D e \geq e \cdot \frac{1}{Z}\left(\int_{\Omega} \frac{g(h)}{\sqrt{\norm{g}(h)}}\P(dh)\right )^{-1}\, e.
$$
We summarize the above properties of $D$ in the following theorem:
\fxnote{Is the fact that the effective diffusion coefficient depleted wrt microscopic diff coefficient given sufficient importance?}
\\
\begin{theorem}\label{thm:properties}  The effective diffusion tensor $D$ satisfies the following properties:	
	\begin{enumerate}
		\item $D$ is  strictly positive definite.
		\item For all $e \in \R^d$, $e \cdot D$ is given by:
		\begin{equation}
			e\cdot D e = \frac{1}{Z}\inf_{V \in L^2_{pot}(\P)} \int_{\Omega} \left(e + V(h)\right)\cdot g^{-1}(h)\left(e + V(h)\right)\sqrt{\norm{g}(h)}\,\mathbb{P}(dh),
		\end{equation}	
		 Moreover, $\chi$ is the unique minimiser of this functional.
		\item For all $e \in \R^d$, the effective diffusion tensor $D$ satisfies the following inequality:
			$$e\cdot D_* e \leq e \cdot D e \leq e \cdot D^* e ,$$
		where 
		\begin{equation}
			D^* = \frac{1}{Z}\int_{\Omega} g^{-1}(h)\sqrt{\norm{g}(h)}\P(dh),
		\end{equation}
		and
		\begin{equation}
			D_* = \frac{1}{Z}\left(\int_{\Omega} \frac{g(h)}{\sqrt{\norm{g}(h)}}\P(dh)\right)^{-1}.
		\end{equation}
	\item In particular $D$ satisfies:
	\begin{equation}
		\label{eq:ergodic_depletion}
		\frac{1}{Z^2} \leq e \cdot D e \leq 1.
	\end{equation}
	\end{enumerate}
\end{theorem}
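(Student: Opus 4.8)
The plan is to prove part 2 first, since parts 1, 3 and 4 all follow from the variational identity once it is in hand. For a unit vector $e$ define the quadratic functional
\[
	J_e(V) = \frac{1}{Z}\int_{\Omega} (e+V(h))\cdot g^{-1}(h)(e+V(h))\sqrt{\norm{g}(h)}\,\P(dh), \qquad V\in L^2_{pot}(\P).
\]
First I would record that, under the convention $(\mathbb{D}\chi)_{ij}=D_j\chi_i$, one has $(I+\mathbb{D}\chi)^\top e = e + \mathbb{D}\chi^e$, so that the symmetry of $g^{-1}$ collapses the homogenization formula (\ref{eq:ergodic_eff_diff}) to $e\cdot D e = J_e(\mathbb{D}\chi^e)$; it therefore suffices to show that $\mathbb{D}\chi^e$ minimises $J_e$. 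Assumption \textbf{D} yields $I\preceq g\preceq(1+K^2)I$, hence $\tfrac{1}{1+K^2}I\preceq g^{-1}\preceq I$ and $\sqrt{\norm{g}}\geq 1$; combined with the fact that every $V\in L^2_{pot}(\P)$ has mean zero (antisymmetry of the $D_i$ gives $\langle D_i\phi,1\rangle_{L^2(\P)}=0$, preserved under $L^2$-limits), this makes $J_e$ strictly convex and coercive on the Hilbert space $L^2_{pot}(\P)$, so it admits a unique minimiser $V^\ast$.

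The minimiser is characterised by its Euler--Lagrange equation: differentiating $J_e(V^\ast+sW)$ at $s=0$ along $W\in L^2_{pot}(\P)$ gives $\langle \sqrt{\norm{g}}\,g^{-1}(e+V^\ast),\,W\rangle_{L^2(\P)}=0$ for all such $W$, i.e. $\sqrt{\norm{g}}\,g^{-1}(e+V^\ast)\in L^2_{pot}(\P)^{\perp}=L^2_{div}(\P)\oplus L^2_c(\P)$. Now the preceding proposition asserts that $\mathbb{D}\chi^e\in L^2_{pot}(\P)$ and $\sqrt{\norm{g}}\,g^{-1}(e+\mathbb{D}\chi^e)\in L^2_{div}(\P)$. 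Since $L^2_{div}(\P)\subseteq L^2_{pot}(\P)^{\perp}$, the field $\mathbb{D}\chi^e$ already solves the Euler--Lagrange equation, so by strict convexity it must coincide with $V^\ast$. Hence $e\cdot De=J_e(\mathbb{D}\chi^e)=\min_{V\in L^2_{pot}(\P)}J_e(V)$, which is part 2; uniqueness of the minimiser is the stated uniqueness of $\chi$.

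With the variational identity in hand, part 1 is immediate: coercivity gives $e\cdot De = J_e(V^\ast)\geq \tfrac{1}{Z(1+K^2)}\int_{\Omega}\norm{e+V^\ast}^2\,\P(dh) = \tfrac{1}{Z(1+K^2)}\big(1+\Norm{V^\ast}_{L^2(\P)}^2\big)\geq \tfrac{1}{Z(1+K^2)}>0$ for every unit $e$, so $D$ is strictly positive definite. For part 3 the upper bound follows from the admissible choice $V=0$, giving $e\cdot De\leq J_e(0)=e\cdot D^\ast e$. For the lower bound I would enlarge the feasible set from $L^2_{pot}(\P)$ to all mean-zero fields $L^2_{pot}(\P)\oplus L^2_{div}(\P)$, which can only lower the infimum; the relaxed quadratic problem carries no curl constraint and is solved pointwise by a Lagrange multiplier, the optimal $e+V$ being $\tfrac{g}{\sqrt{\norm{g}}}c$ with the constant $c$ determined by $\int_\Omega(e+V)\,\P(dh)=e$, which evaluates to exactly $e\cdot D_\ast e$.

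Finally, part 4 follows from part 3 together with the matrix inequalities $I\preceq g\preceq\norm{g}\,I$, valid because $g=I+\nabla h\otimes\nabla h$ has eigenvalues $1$ (multiplicity $d-1$) and $1+\norm{\nabla h}^2=\norm{g}$. The upper bound uses $g^{-1}\preceq I$, so $e\cdot g^{-1}e\leq 1$ and $e\cdot D^\ast e=\tfrac1Z\int_\Omega e\cdot g^{-1}e\,\sqrt{\norm{g}}\,\P(dh)\leq \tfrac1Z\int_\Omega\sqrt{\norm{g}}\,\P(dh)=1$; the lower bound uses $g\preceq\norm{g}\,I$, whence $\tfrac{g}{\sqrt{\norm{g}}}\preceq\sqrt{\norm{g}}\,I$ and $\int_\Omega\tfrac{g}{\sqrt{\norm{g}}}\,\P(dh)\preceq Z I$, so its inverse is $\succeq\tfrac1Z I$ and $e\cdot D_\ast e\geq \tfrac{1}{Z^2}$. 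The hard part of the whole argument is part 2: establishing strict convexity and coercivity of $J_e$ on the abstract space $L^2_{pot}(\P)$ and, above all, correctly matching the Euler--Lagrange equation of the minimiser with the weak cell equation of the preceding proposition, so that the $\mathcal{H}_1$-limit $\chi$ supplied by Lemma \ref{lem:chi_lambda_limit} is genuinely identified with the concrete variational minimiser.
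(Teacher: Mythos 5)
Your argument is correct and follows essentially the same route the paper takes (the paper only sketches it in the discussion preceding the theorem): identify the weak cell problem as the Euler--Lagrange equation of the quadratic functional so that $\mathbb{D}\chi^e$ is its unique minimiser, take $V=0$ for the upper bound, and relax the feasible set to all mean-zero fields for the lower bound. Your write-up supplies the details the paper omits --- notably the coercivity/strict convexity of $J_e$ under Assumption \textbf{D} and the transpose bookkeeping reducing (\ref{eq:ergodic_eff_diff}) to $J_e(\mathbb{D}\chi^e)$ --- and these are all sound.
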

\smartqed \qed
\begin{remark}
	In particular, Theorem \ref{thm:properties} implies that the macroscopic diffusion tensor $D$ is always depleted with respect to the microscopic diffusion tensor (which is rescaled to be $I$.  This is intuitively clear,  as we expect a particle undergoing Brownian motion along a surface to require extra effort to surpass surface undulation compared to a free Brownian on the underlying plane.   This is analogous to the case of diffusive transport of passive particles in a potential flow, where the macroscopic diffusion tensor is always depleted,\cite{pavliotis2008multiscale}.
\end{remark}

\section{The Area Scaling Approximation}
\label{sec:area_scaling}
In this section we derive a  closed-form expression for the effective diffusion tensor $D$ which holds for a large class of two-dimensional random surfaces.  More specifically, we show that if $D$ is isotropic,  then the \emph{area-scaling approximation} holds, namely that $D = \frac{1}{Z}$,  where $Z$ is the average surface area given by (\ref{eq:Z}).  This result generalises the corresponding result for the periodic surface case described in \cite{gustafsson1997diffusion, naji2007diffusion,granek1997semi} and proved rigorously in \cite{duncan2013multiscale}.  The result is based on a duality transformation argument similar to that described in \cite{kohler1982bounds} and  \cite[Section 1.5]{zhikov1994homogenization}, which relates the effective conductivity coefficient corresponding to the two-dimensional multiscale problem:
$$-\nabla\cdot\left(A^{\epsilon}(x)\nabla u^{\epsilon}(x)\right) = 0, \qquad x \in \Omega \subset \R^2$$
to the effective conductivity coefficient $A_Q$ arising from the ``rotated problem'':
$$-\nabla\cdot\left(Q^\top A^{\epsilon}(x)Q\nabla u^{\epsilon}(x)\right) = 0, \qquad x \in \Omega \subset \R^2$$
where $Q$ is a rotation about the origin.  A particular corollary of this argument is that the determinant of the conductivity coefficient is preserved in the limit as $\epsilon \rightarrow 0$,  that is,  if $A^\epsilon$ has determinant $k$ for all $\epsilon > 0$ then $A$ has determinant $k$ also.
\\\\
In two dimensions, the matrix $g^{-1}(x/\epsilon,h)\sqrt{\norm{g}(x/\epsilon,h)}$ has determinant $1$ for all $\epsilon > 0$.  By a straightforward modification of the arguments of  \cite[Theorem 1]{kohler1982bounds} we are able to provide a closed form expression for the determinant of the effective diffusion tensor, relating $\norm{D}$ to the average excess surface area $Z$.  In the particular case when $D$ is isotropic, we thus obtain an explicit formula for $D$.  This generalises the area scaling estimate described in \cite{gustafsson1997diffusion, naji2007diffusion,granek1997semi,duncan2013multiscale} for periodic surfaces to surfaces defined by stationary, ergodic random fields.
\\
\begin{theorem}
	\label{thm:as}
	In two dimensions, $D$ satisfies the following relationship
	\begin{equation}
			\label{eq:det}
			\det\left(D\right) = \frac{1}{Z^2}.
	\end{equation}
	Consequently, if $\lambda_1$ and $\lambda_2$ are the eigenvalues of $D$ with $\lambda_1 \leq \lambda_2$, then
	\begin{equation}
		\frac{1}{Z^2} \leq \lambda_1 \leq \frac{1}{Z} \leq \lambda_2 \leq 1.
	\end{equation}
	In particular, if $D$ is isotropic, then it can be written explicitly as
	\begin{equation}
			\label{eq:area_scaling}
			D = \frac{1}{Z}\mathbf{I}.
	\end{equation}
\end{theorem}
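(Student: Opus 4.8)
The plan is to reduce the statement to the single determinant identity $\det A_{\mathrm{eff}}=1$ for an associated homogenized conductivity tensor, and to prove that identity by the two-dimensional rotation (duality) argument of \cite{kohler1982bounds,zhikov1994homogenization}. Set $A(h):=g^{-1}(h)\sqrt{\norm{g}(h)}$, a symmetric, uniformly elliptic matrix field on $\Omega$. By part (2) of Theorem \ref{thm:properties}, for every unit vector $e$,
$$
Z\,(e\cdot D e)=\inf_{V\in L^2_{pot}(\P)}\int_{\Omega}(e+V(h))\cdot A(h)(e+V(h))\,\P(dh)=e\cdot A_{\mathrm{eff}}\,e,
$$
so that $ZD=A_{\mathrm{eff}}$ is exactly the effective conductivity tensor of $A$. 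In two dimensions $\det A(h)=(\sqrt{\norm{g}(h)})^2\det g^{-1}(h)=1$ pointwise, since $\det(cM)=c^2\det M$ for $2\times2$ matrices. Hence the theorem reduces to showing $\det A_{\mathrm{eff}}=1$: this gives $Z^2\det D=1$, i.e. (\ref{eq:det}), and the rest is elementary.

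Next I would carry out the duality argument. Let $J=\left(\begin{smallmatrix}0&-1\\1&0\end{smallmatrix}\right)$ be rotation by $\tfrac{\pi}{2}$, so that $J^\top=J^{-1}$ and $J^2=-I$, and recall the cofactor identity $JMJ^\top=(\det M)\,M^{-\top}$, valid for every $2\times 2$ matrix $M$. Because $A$ is symmetric with $\det A\equiv1$, applying this with $M=A$ gives $A^{-1}=JAJ^\top$, whence the rotated matrix field $\hat A:=JA^{-1}J^\top=J^2A(J^\top)^2=A$ coincides with $A$ itself; this self-duality is the analytic content of $\det A\equiv1$. The planar duality principle states that the effective tensor of the rotated medium is $\hat A_{\mathrm{eff}}=JA_{\mathrm{eff}}^{-1}J^\top$: indeed, if $V\in L^2_{pot}(\P)$ is the corrector for direction $e$, so that $\eta:=A(e+V)$ is divergence-free with $\mathbb{E}_\P[\eta]=A_{\mathrm{eff}}e$, then by the planar identities $\operatorname{curl}(J\xi)=\operatorname{div}\xi$ and $\operatorname{div}(J\xi)=-\operatorname{curl}\xi$ the rotated fields $J\eta$ and $J(e+V)=\hat A(J\eta)$ are respectively curl-free and divergence-free, so $J\eta$ is the corrector configuration for $\hat A$ in the direction $JA_{\mathrm{eff}}e$ and $\hat A_{\mathrm{eff}}(JA_{\mathrm{eff}}e)=\mathbb{E}_\P[J(e+V)]=Je$. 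Since $\hat A=A$ forces $\hat A_{\mathrm{eff}}=A_{\mathrm{eff}}$, we obtain $A_{\mathrm{eff}}=JA_{\mathrm{eff}}^{-1}J^\top$, and the cofactor identity applied to $M=A_{\mathrm{eff}}^{-1}$ gives $A_{\mathrm{eff}}=\tfrac{1}{\det A_{\mathrm{eff}}}A_{\mathrm{eff}}^\top=\tfrac{1}{\det A_{\mathrm{eff}}}A_{\mathrm{eff}}$ by symmetry, whence $\det A_{\mathrm{eff}}=1$ and $\det D=1/Z^2$.

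The remaining assertions are then immediate. Writing $\lambda_1\le\lambda_2$ for the eigenvalues of the symmetric positive-definite tensor $D$, we have $\lambda_1\lambda_2=\det D=1/Z^2$, while the Rayleigh-quotient bounds $1/Z^2\le e\cdot De\le 1$ from (\ref{eq:ergodic_depletion}) give $1/Z^2\le\lambda_1$ and $\lambda_2\le1$. Since $\lambda_1\le\lambda_2$, the relation $\lambda_1\lambda_2=1/Z^2$ forces $\lambda_1^2\le1/Z^2\le\lambda_2^2$, i.e. $\lambda_1\le1/Z\le\lambda_2$, which is the claimed chain of inequalities. Finally, if $D$ is isotropic then $D=\lambda I$ with $\lambda^2=\det D=1/Z^2$, and positivity selects $\lambda=1/Z$, giving $D=\tfrac1Z\mathbf{I}$, which is (\ref{eq:area_scaling}).

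The main obstacle is the middle step: rigorously transferring the classical planar rotation duality, usually phrased for periodic or genuinely spatial divergence-form problems, into the abstract stationary-ergodic $L^2(\P)$ setting. Concretely, one must verify that the rotation $J$ interchanges the potential subspace $L^2_{pot}(\P)$ and the solenoidal subspace $L^2_{div}(\P)\oplus L^2_c(\P)$ of the Helmholtz decomposition, and that the corrector flux $\eta=A(e+V)$ transforms as asserted. The key orthogonality $\langle J\mathbb{D}\phi,\mathbb{D}\psi\rangle_{L^2(\P)}=0$ follows from the antisymmetry and commutativity of the operators $D_i$, but the full correspondence requires the density and closure arguments underlying the decomposition. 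Once $\det A_{\mathrm{eff}}=1$ is secured, everything else is pointwise determinant arithmetic and $2\times2$ linear algebra.
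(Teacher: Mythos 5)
Your proposal is correct and follows essentially the same route as the paper: the two-dimensional rotation (duality) argument of Kohler--Papanicolaou/Zhikov, hinging on the pointwise identity $\det\bigl(g^{-1}(h)\sqrt{\norm{g}(h)}\bigr)=1$, the $\tfrac{\pi}{2}$-rotation isomorphism between $L^2_{pot}(\P)$ and the solenoidal fields, and the $2\times 2$ cofactor identity. The paper phrases the duality via Thompson's dual variational principle for $e\cdot(ZD)^{-1}e$ and substitutes $F=QG$ there, while you rotate the corrector flux directly; these are two standard presentations of the identical argument, and your closing remarks correctly identify the one step (transferring the Helmholtz/rotation correspondence to the abstract $L^2(\P)$ setting) that the paper likewise asserts rather than proves in detail.
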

\fxnote{Proof is a modification of a proof by Papanicolau and Kohler.  Not sure if I should skip this?  I don't have this proof in the 1st paper.}
\begin{proof}
	We follow an approach similar to \cite{kohler1982bounds}.   We first note that Thompson's duality principle \cite[Section 2.6.2]{mei2010homogenization} applies equivalently in the space $\left(L^2(\P)\right)^d = L^2_{\rm pot}(\P) \oplus L^2_{\rm div}(\P)\oplus L^2_{\rm c}(\P)$, so that
\begin{equation}
	\label{eq:case1_dual_minimisation}
	e \cdot \left(ZD\right)^{-1} e = \displaystyle\inf_{F \in L^2_{div}(\P)}\int_{\Omega} \left(F(h) + e\right)\cdot \frac{g(h)}{\sqrt{\norm{g}(h)}}\left(F(h) + e\right) \, \P(dh)
\end{equation}
Let $Q:\R^2 \rightarrow \R^2$ denote a $\frac{\pi}{2}$ rotation about the origin in $\R^2$.    Given $F \in L^2_{div}(\P)$, define $$\mathcal{Q}F(h) =  (QF)(h).$$
The map $\mathcal{Q}:\left(L^2(\P)\right)^d \rightarrow \left(L^2(\P)\right)^d$  defined by
\begin{equation*}
	\mathcal{Q}G(h) = \left(QG\right)(h),
\end{equation*}
is an isomorphism between the sets
$$\left\lbrace \mathbb{D} f \, | \, f \in C^1_b(\Omega) \right\rbrace \mbox{ and }  \lbrace F \in (C^1_b(\Omega))^2  \, | \, \int_{\Omega}F(h)\P(dh) = 0 \mbox{ and } \mathbb{D}\cdot F = 0 \rbrace,$$
which can be extended to an isomorphism between $L_{pot}^2(\P)$ and $  L^2_{div}(\P)$.   Thus (\ref{eq:case1_dual_minimisation}) can be rewritten as
\begin{equation*}
\begin{split}
	e \cdot \left(Z \, D\right)^{-1} e &= \displaystyle\inf_{G \in L^2_{pot}(\P)}\int_{\Omega} \left(QG + e\right)\cdot \frac{g(h)}{\sqrt{\norm{g}(h)}}\left(QG + e\right) \, \P(dh) \\
	&= \displaystyle\inf_{G \in L^2_{pot}(\P)}\int_{\Omega} \left(G + Q^\top e\right)\cdot Q^\top \frac{g(h)}{\sqrt{\norm{g}(h)}}Q\left(G + Q^\top e\right) \, \P(dh).
\end{split}
\end{equation*}
However, in two dimensions,  for any invertible matrix $A$ we have that
\begin{equation}
	Q^\top A^{-1} Q = {A^\top}/{\det({A})}, 
\end{equation}
so that, since $\det\left(g^{-1}\sqrt{\norm{g}(y)}\right) = 1$,
\begin{equation*}
\begin{split}
	e \cdot \left(Z \, D \right)^{-1} e &= \displaystyle\inf_{G \in L^2_{pot}(\P)}\int_{\Omega} \left(G + Q^\top e\right)\cdot g^{-1}(h)\left(G + Q^\top e\right) \, \sqrt{\norm{g}(h)}\, \P(dh) 	\\
			&= \left(Q^\top e\right) \cdot Z\, D \left(Q^\top e\right).
\end{split}
\end{equation*}
Thus $$\frac{1}{Z}\, e \cdot D^{-1} e = Z\, e \cdot Q D Q^\top e  = Z\det({D})\, e \cdot D^{-1} \, e,$$
so that $\det(D) =  \frac{1}{Z^2}$. 
\\
\smartqed \qed
\end{proof}
$ $\\\\
Following the results of Theorem \ref{thm:as} it is natural to ask for conditions which guarantee that $D$ is isotropic.  By applying Schur's lemma  \cite{schur1905neue,alexanderian2012homogenization} we are able to provide a natural sufficient condition for $D$ to be isotropic.  To this end, let $Q \in \R^{2\times 2}$ be a proper orthogonal matrix.  Define $\mathcal{Q}^\top:\Omega \rightarrow \Omega$ to be
\begin{equation*}
	\mathcal{Q}^{\top}h(x)  = h(Q^{\top}x)	\qquad	x \in \R^{2}.
\end{equation*}
Clearly $\mathcal{Q}^{\top}$ is an isometry on $h$ which induces the following transformations on the metric tensor.
\\
\begin{lemma}
	\label{lemm:metric_tensor_rotation_stoch}
	Let $Q \in \R^{2\times 2}$ be any rotation about the origin, then
	\begin{equation}
		\label{eq:metric_tensor_condition_stoch}
				{g}^{-1}(x, \mathcal{Q}^{\top}h) = Q g^{-1}(Q^{\top}x,h) Q^{\top}
		\end{equation}
	and 
	\begin{equation}
		\label{eq:area_element_condition_stoch}
				\norm{{g}}(x, \mathcal{Q}^{\top}h) = \norm{{g}}(Q^{\top}x, h),
	\end{equation}
	for all $x \in \mathcal{D}$.
\end{lemma}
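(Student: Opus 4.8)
The plan is to reduce both claimed identities to a single transformation rule for the metric tensor itself, namely $g(x, \mathcal{Q}^{\top}h) = Q\, g(Q^{\top}x, h)\, Q^{\top}$, and then to read off the formulas for $g^{-1}$ and $\norm{g}$ by inverting and taking determinants.

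First I would differentiate the rotated field. Writing $\tilde h = \mathcal{Q}^{\top}h$, so that $\tilde h(x) = h(Q^{\top}x)$, the chain rule gives $\nabla\tilde h(x) = Q\,\nabla h(Q^{\top}x)$. The appearance of $Q$ here (rather than $Q^{\top}$) is the one place where the transposes must be tracked carefully: it arises from contracting the Jacobian $Q^{\top}$ of the inner map against the gradient, and getting this factor right is essentially the only subtlety in the argument.

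Next I would substitute this into the definition (\ref{eq:metric_tensor}) of the metric tensor and use the elementary outer-product identity $(Qa)\otimes(Qa) = Q(a\otimes a)Q^{\top}$, valid for any vector $a$, together with $I = QQ^{\top}$ (which holds because $Q$ is orthogonal). This yields
$$g(x, \tilde h) = I + \nabla\tilde h(x)\otimes\nabla\tilde h(x) = Q\left(I + \nabla h(Q^{\top}x)\otimes\nabla h(Q^{\top}x)\right)Q^{\top} = Q\, g(Q^{\top}x, h)\, Q^{\top},$$
which is the key intermediate identity.

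Finally, the two stated formulas follow immediately. Inverting and using $Q^{-1} = Q^{\top}$ gives $g^{-1}(x,\tilde h) = Q\, g^{-1}(Q^{\top}x, h)\, Q^{\top}$, which is (\ref{eq:metric_tensor_condition_stoch}); taking determinants and using $\det Q = 1$, so that $\det(Q)\det(Q^{\top}) = 1$, gives $\norm{g}(x,\tilde h) = \norm{g}(Q^{\top}x, h)$, which is (\ref{eq:area_element_condition_stoch}). I do not expect any genuine obstacle here: beyond the transpose bookkeeping in the chain-rule step, the only point worth noting is that it is the orthogonality of $Q$, not merely its invertibility, that makes $I$ transform correctly and causes the determinant factors to drop out.
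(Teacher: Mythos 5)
Your proof is correct and follows essentially the same route as the paper: the chain rule gives $\nabla(\mathcal{Q}^{\top}h)(x) = Q\,\nabla h(Q^{\top}x)$, substituting into the definition of $g$ yields the key identity $g(x,\mathcal{Q}^{\top}h) = Q\,g(Q^{\top}x,h)\,Q^{\top}$, and the two stated formulas follow by inversion and determinants. Your write-up is in fact slightly more complete, since the paper stops at the intermediate identity and leaves the final inversion and determinant steps implicit.
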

\begin{proof}
	It follows from the chain rule that 
	\begin{equation} 
		\mathbb{D} \left(\mathcal{Q}^\top h\right)(x)  = \nabla h\circ Q^{\top}(x) = Q\left(\mathbb{D} h\right) (Q^{\top}x).
	\end{equation}
From this, it is clear that 
\begin{equation*}
\begin{split}
	{g}(x, \mathcal{Q}^\top h) &= I + \mathbb{D} \left( \mathcal{Q}^\top{h}\right)(x) \otimes  \mathbb{D} \left( \mathcal{Q}^\top h)\right)(x) \\
					&= I + Q  \left[\left(\mathbb{D} {h}\right)(Q^\top x) \otimes  \left(\mathbb{D} {h}\right)(Q^\top x) \right] Q^\top \\
					&= Q \, {g}(Q^\top x, h) Q^\top.
\end{split}
\end{equation*}
\end{proof}
\\
We can now state the sufficient condition for the effective diffusion tensor to be isotropic.
\\

\begin{theorem}
	\label{thm:ergodic_isotropic}
	Let $Q \in \R^{2\times 2}$ be a rotation about some point by an angle not equal to $0$ of $\pi$.   Suppose that the random field measure $\P$ is invariant with respect to the corresponding operator $\mathcal{Q}^\top$,  that is $$\P \circ \left(\mathcal{Q}^{\top}\right)^{-1} = \P.$$   Then  $D$ is isotropic.
\end{theorem}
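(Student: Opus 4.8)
The plan is to deduce from the hypothesis that $D$ commutes with the rotation $Q$, and then to invoke the elementary fact behind Schur's lemma that a symmetric $2\times 2$ matrix commuting with a planar rotation through an angle $\theta\notin\{0,\pi\}$ must be a scalar multiple of the identity. By stationarity (Assumption \textbf{A}) we may reduce to the case in which $Q$ is a rotation about the origin: a rotation about an arbitrary point is the composition of an origin rotation with a translation, and since $\P$ is translation invariant, invariance of $\P$ under the point rotation is equivalent to invariance under $\mathcal{Q}^\top$ for the origin rotation of the same angle.

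The key identity I would establish is that
\begin{equation*}
(Q^\top e)\cdot D(Q^\top e) = e\cdot D e \qquad \text{for every unit vector } e \in \R^2,
\end{equation*}
which is exactly the statement $QDQ^\top = D$, i.e.\ $DQ = QD$. Starting from the variational characterisation in part 2 of Theorem \ref{thm:properties}, I would write $Z(Q^\top e)\cdot D(Q^\top e)$ as the infimum over $V \in L^2_{pot}(\P)$ of $\int_\Omega (Q^\top e + V(h))\cdot g^{-1}(h)(Q^\top e + V(h))\sqrt{\norm{g}(h)}\,\P(dh)$. Using the invariance $\P\circ(\mathcal{Q}^\top)^{-1}=\P$ (equivalently, invariance under $\mathcal{Q}$), I substitute $h\mapsto \mathcal{Q}h$ in this integral and apply Lemma \ref{lemm:metric_tensor_rotation_stoch} in the form $g^{-1}(\mathcal{Q}h)=Q^\top g^{-1}(h)Q$ and $\norm{g}(\mathcal{Q}h)=\norm{g}(h)$. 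Since $Q(Q^\top e + V(\mathcal{Q}h)) = e + QV(\mathcal{Q}h)$, the integrand is converted into $(e+W(h))\cdot g^{-1}(h)(e+W(h))\sqrt{\norm{g}(h)}$ with $W(h):=QV(\mathcal{Q}h)$, and taking the infimum yields the identity provided $V\mapsto W$ is a bijection of $L^2_{pot}(\P)$ onto itself.

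Verifying that last claim is the crux and, I expect, the main obstacle. I would show that the operator $\mathcal{T}^{-1}V(h):=QV(\mathcal{Q}h)$ is a bijective isometry of $(L^2(\P))^2$ that maps $L^2_{pot}(\P)$ onto itself. The isometry property follows from the orthogonality of $Q$ together with the measure invariance. That potential fields are sent to potential fields is the delicate point: repeating the chain-rule computation from the proof of Lemma \ref{lemm:metric_tensor_rotation_stoch}, together with the commutation relation $\mathcal{Q}\tau_{\lambda e_i}=\tau_{\lambda Q^\top e_i}\mathcal{Q}$ between the rotation operator and the translation group, gives $\mathbb{D}(f\circ\mathcal{Q})(h)=Q(\mathbb{D}f)(\mathcal{Q}h)$ for $f\in C^1_b(\Omega)$, so that $\mathcal{T}^{-1}(\mathbb{D}f)=\mathbb{D}(f\circ\mathcal{Q})$; density of such gradients in $L^2_{pot}(\P)$ then extends the statement to the whole space, and the inverse map $V\mapsto Q^\top V(\mathcal{Q}^\top h)$ is handled identically.

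Once $QD = DQ$ is known, the conclusion is immediate. The real commutant of a nontrivial planar rotation consists of the matrices $aI+bJ$, where $J$ denotes the $\tfrac{\pi}{2}$ rotation; since $J$ is antisymmetric and $D$ is symmetric, we must have $b=0$, whence $D=\lambda I$ for some $\lambda>0$. Thus $D$ is isotropic, and combining with Theorem \ref{thm:as} identifies $\lambda=1/Z$.
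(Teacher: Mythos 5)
Your proposal is correct and follows essentially the same route as the paper: reduce to a rotation about the origin by stationarity, substitute the measure-preserving rotation into the variational formula for $e\cdot De$ from Theorem \ref{thm:properties}, apply Lemma \ref{lemm:metric_tensor_rotation_stoch} to convert the integrand, deduce $D=QDQ^\top$, and conclude by Schur's lemma. The only difference is one of explicitness: where the paper simply minimises over the dense set $\lbrace \mathbb{D}f \,|\, f\in C^1_b(\Omega)\rbrace$, you spell out why $V\mapsto QV(\mathcal{Q}h)$ is a bijective isometry of $L^2_{pot}(\P)$ and unpack Schur's lemma into the elementary commutant computation --- both correct and worthwhile clarifications, but not a different proof.
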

\begin{proof}
	By stationarity, we may assume that $Q$ is a rotation about the origin.   The set $\lbrace \mathbb{D} f \, | \, f \in C^1_b(\Omega) \rbrace$ is dense in $L^2_{pot}(\P)$, thus we may minimise (\ref{eq:ergodic_minimisation}) over this set.  Moreover, since $\mathcal{Q}^{\top}$ is measure-preserving we can make the substitution  $h \rightarrow \tau_x\mathcal{Q}^\top h$ in (\ref{eq:ergodic_minimisation}) to get 
	\begin{align*}
		e\cdot D e = \frac{1}{Z}\inf_{f \in C^1_b(\Omega)}\int_{\Omega}& \Big[\left(\nabla{{f}}(x, \mathcal{Q}^{\top}h) + e\right)\\ &\cdot {{g}^{-1}}(x, \mathcal{Q}^{\top}h)\left(\nabla{{f}}(x, \mathcal{Q}^{\top}h) + e\right) \sqrt{\norm{{{g}}}(x, \mathcal{Q}^{\top}h)}\Big]\,\mathbb{P}(dh),
	\end{align*}
Substituting (\ref{eq:metric_tensor_condition_stoch}) in the above we obtain
\begin{align*}
		e\cdot D e = \frac{1}{Z}\inf_{f \in C^1_b(h)}\int_{\Omega}&\Big[ Q^\top\left(Q\nabla{{f}}(Q^\top x, h) + e\right)\\ &\cdot {{g}^{-1}}(Q^\top x, h)Q^\top\left(Q\nabla{{f}}(x, h) + e\right) \sqrt{\norm{{{g}}}(Q^\top x, h)}\Big]\mathbb{P}(dh).
	\end{align*}
Using the fact that $Q$ is orthogonal and $\mathbb{P}$ is invariant under translations $\tau_y$ for any $y \in \R^2$ we obtain
\begin{equation*}
	\begin{split}
	e\cdot D e & = \frac{1}{Z}\inf_{f \in C^1_b(\Omega)}\int_{\Omega} \left(\mathbb{D}{{f}}(h) + Q^\top e\right)\cdot {{g}^{-1}}(h)\left(\mathbb{D}{{f}}( h) + Q^\top e\right) \sqrt{\norm{{{g}}}(h)}\mathbb{P}(dh)\\
		& = \left(Q^\top e\right)\cdot D \, \left(Q^\top e\right) \\
		&=  e \cdot \left(Q D Q^\top\right)e.
	\end{split}
\end{equation*}
Since $e$ is arbitrary, it follows that $D = QDQ^{\top}$ and so, by applying Schur's lemma it follows that the  effective diffusion $D$ is isotropic.
\smartqed \qed
\end{proof}
$ $\\\\
As an immediate corollary of Theorem \ref{lemm:metric_tensor_rotation_stoch} we note that it is sufficient that the random field is isotropic,  i.e. the two point covariance is of the form $C(x, y) = C(\norm{x-y})$ for $D$ to be isotropic.

\section{Numerical Scheme}
\label{sec:numerical_methods}
In general, when the effective diffusion tensor cannot be expressed in terms of a closed-form expression, one must resort to numerical methods to approximating $D$.   Unlike in the periodic case, the expression (\ref{eq:ergodic_eff_diff}) for $D$ does not lend itself to numerical approximation, due to the fact that the corrector $\chi$ exists only in the abstract space $\mathcal{H}_1$.  In this section we describe a widely applied scheme to numerically approximate $D$ making use of a periodic approximation \cite{owhadi2003approximation,bourgeat2004approximations}.
\\\\
For a fixed realisation $h$ of the random field and $R > 0$, the scheme is as follows:
\begin{enumerate}
\item Define ${F}_R(x, h)$ and ${\Sigma}_R(x, h)$ to be the ``periodized''coefficients given by
	\begin{equation}
		{F}_{R}(x, h) = F({(x \mbox{ \textbf{mod} } B_R)}, h), \mbox{ and } {\Sigma}_{R}(x, h) = \Sigma({(x \mbox{ \textbf{mod} } B_R)}, h),
	\end{equation}
where $F(x,h)$ and $\Sigma(x,h)$ are the drift and diffusion coefficients given by (\ref{eq:drift}) and (\ref{eq:diffusion}) respectively and where $B_R = [0,R]^d$.

\item Let $X_R(t)$ be the solution of the It\^{o} SDE $$X_R(t) = {F}_{R}(X_R(t), h) \,dt + \sqrt{2{\Sigma}_{R}(X_R(t), h)}\,dB(t),$$ and consider the corresponding periodic homogenization problem which gives rise to an effective diffusion tensor $D_R(h)$.   There are numerical approaches to computing the periodized effective diffusion tensor $D_R(h)$.   We adopt a PDE approach,   solving the corresponding periodic cell equation using a piecewise linear finite element scheme,  and using this solution to compute $D_R(h)$ via quadrature.  This approach is described in detail in \cite[Section 5.6]{duncan2013multiscale}.
\\\\
\end{enumerate}
By a simple modification the arguments given in \cite{owhadi2003approximation} and \cite{bourgeat2004approximations}, as $R \rightarrow \infty$, one can show that the periodic approximation $D_R(h)$ will converge to $D$ for $\mathbb{P}$ almost every $h \in h$.  As an illustration of the above numerical scheme we present two random surface models and explore the properties of $D$ using numerical simulations.

\subsection{The Random Protrusion Surface}

In the first example we consider the problem of lateral diffusion on a ``random protrusion surface" a two-dimensional random surface comprised of randomly distributed protrusions, represented as "bump" functions where the centers of the bumps are determined by a Poisson point process with constant intensity $\lambda$.  More specifically, we consider a surface which can be formally written as the the graph of 

\begin{equation}
	\label{eq:ergodic_h_poisson}
	{h}(x) = \sum_{i} f(x - x_i),
\end{equation}
where $\lbrace x_i \rbrace_{i \in \mathbb{N}}$ is a realisation of a Poisson point process and 
\begin{equation}
f(x) = 
\begin{cases} \alpha\exp\left(-\frac{1}{1 - x^2}\right) & \norm{x} < 1 \\
0 & \norm{x} \geq 1, \end{cases} 
\end{equation}
where $\alpha > 0$ is a constant amplitude. A realisation of this random field over the region $[0,20]^2$ is plotted in Figure \ref{fig:ergodic_poisson}.  We note that the inclusions are allowed to overlap. 
\\
\begin{figure}[htb]
\includegraphics[width=\textwidth]{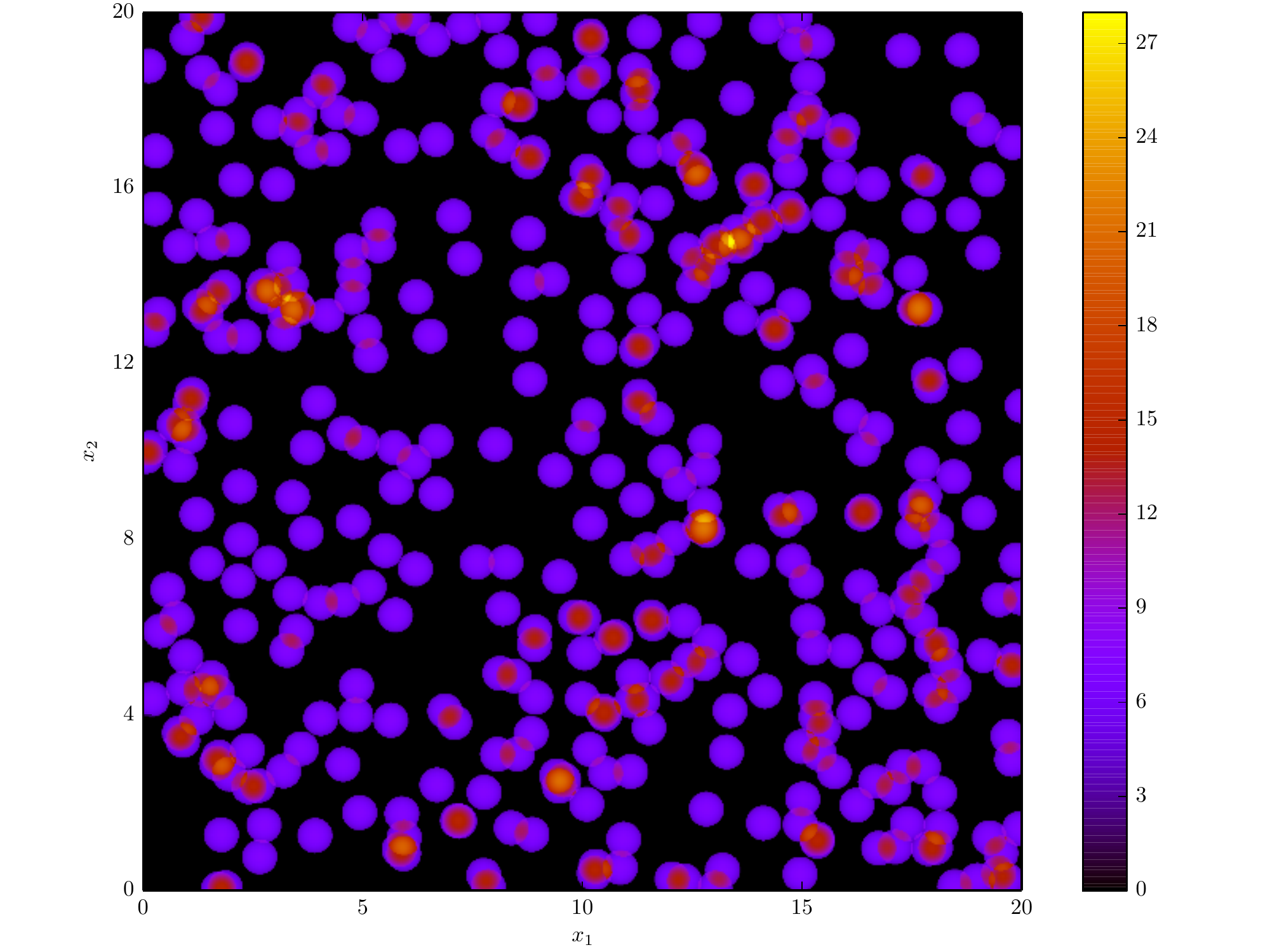}
\caption[Realisation of the ``random protrusion surface"]{Plot of a realisation of the random protrusion surface ${h}(x)$ with homogeneous intensity $\lambda = 1$, over the interval $[0,20]^2$.  Note the overlapping protrusions.}
\label{fig:ergodic_poisson}
\end{figure}

Similar models for random media are widely studied,  in particular in the study of random Schr\"{o}dinger operators \cite{pastur1971schrödinger,leschke2005survey}.    A Poisson point process with intensity $\lambda$ satisfies the following two fundamental properties \cite{daley2007introduction}:
\begin{enumerate}
	\item For every bounded, closed set $B$, the counting measure
	      $$N(B) := \norm{ \lbrace i \, : \, x_i(h) \in B \rbrace },$$  is a Poisson process distributed with mean $\lambda \mu(B)$, where $\mu(B)$ is the Lebesgue measure of $B$. 
	\label{cnd:ppp2}
	\item If $B_1, \ldots B_m$ are disjoint regions then $N(B_1), N(B_2), \ldots N(B_M)$ are independent.
	\\\\
\end{enumerate}
The Poisson point process is completely characterised by its Laplace functional, indeed if $\phi$ is a positive smooth function with compact support on $\R^2$ and we define
$$
	\nu(\phi) = \sum_{i}\phi(x_i(h)),
$$
then
\begin{equation}
	\label{eq:ergodic_laplace_functional_poisson}
	\mathbb{E}\left[ e^{-\nu(\phi)}\right] = \exp\left(\lambda \int e^{-\phi(y)} - 1 \, dy\right).
\end{equation}
From (\ref{eq:ergodic_laplace_functional_poisson}) we see that the Poisson point process is stationary with respect to spatial translations,  and thus so is ${h}(x)$.  Furthermore,  it is well known that ${h}(x)$ is ergodic with respect to spatial translations \cite[Proposition 2.6]{meester1996continuum}.  Realisations of the field ${h}(x)$ are clearly smooth and bounded with all derivatives bounded,  so that this random field satisfies the conditions of Theorem \ref{thm:ergodic_homog_thm}, which guarantees the existence of a homogenization limit.   Moreover,  it is straightforward to see that since the intensity $\lambda$ is constant, the conditions of Theorem \ref{thm:ergodic_isotropic} holds, and so $D$ is isotropic and thus equal to $\frac{1}{Z}$.   
\\\\
Properties 1 and 2 of the Poisson point process can be used to generate realisations of ${h}(x)$ over the domain $B_R = [0,R]^2$.  To sample the centers of the inclusions in this region, we first sample the number of points $N$ from the Poisson distribution with mean value $\lambda R^2$.   The centers of the inclusions $x_1(h), \ldots, x_N(h)$ are sampled uniformly in $[0,R]^2$.   
\\\\
To demonstrate the periodic approximation scheme,  in Figure \ref{fig:ergodic_poisson_eff_diff} we plot values of $D_R$ of the effective diffusion tensor for the random protrusion model,  for varying $R$ and for two sets of parameters, namely $\lambda = 0.5$, $\alpha = 1$ and  $\lambda = 1.5$, $\alpha = 1$.  Since $D_R$ quickly becomes isotropic as $R$ increases, we only show the first component.  For each value of $R$, $10^3$ independent surface realisations are generated,  and for each realisation,  $D_R$ is computed using a piecewise linear finite element scheme, refining the mesh-size until the relative error between successive refinements is $10^{-2}$.   The dashed lines denote the area scaling approximation of $D$, given by $D_{as} = \frac{1}{Z}$, and we see that there is good agreement between the mean value of $D_R(h)$ and $D$ for large values of $R$.
\\\\
To further confirm the results of Theorems \ref{thm:ergodic_homog_thm} and \ref{thm:as}, we compare the area-scaling estimate for this surface to the macroscopic diffusion tensor estimated from a long time MCMC simulation of a particle undergoing Brownian motion on a single realisation of the surface, using an Euler-Maruyama discretisation of (\ref{eq:ergodic_sde}).  In Figure \ref{fig:ergodic_mcmc_poisson}  we plot the macroscopic diffusion tensor computed for surfaces with parameters $\lambda = 0.5, \alpha=1$ and $\lambda = 1.5$ and $\alpha = 1$, respectively.  The particle trajectory is simulated  with timestep length $10^{-6}$ for $t \leq 4000$.  The macroscopic diffusion tensor $D_{mcmc}$ is computed ergoically from a single run of the Markov process using a sampling time-step of size $1$.  The dashed lines denote the area scaling approximation,  and we see that, as time increases,  the long term diffusion coefficient converges to the area-scaling estimate.   We note that while approximating the effective diffusion tensor directly from a Monte-Carlo simulation is far more straightforward than using the finite-element approach adopted here,  the latter method is more robust  and allows one to explore parameter regimes where surface realisations are possess rapid variations.  For such surfaces,  the resulting SDE becomes increasingly stiff and one must take increasingly smaller time-steps to correctly capture the long-term diffusion tensor,  which quickly becomes prohibitively expensive in terms of computation time.

\begin{figure}[htb]
\centering
\includegraphics[scale=0.5]{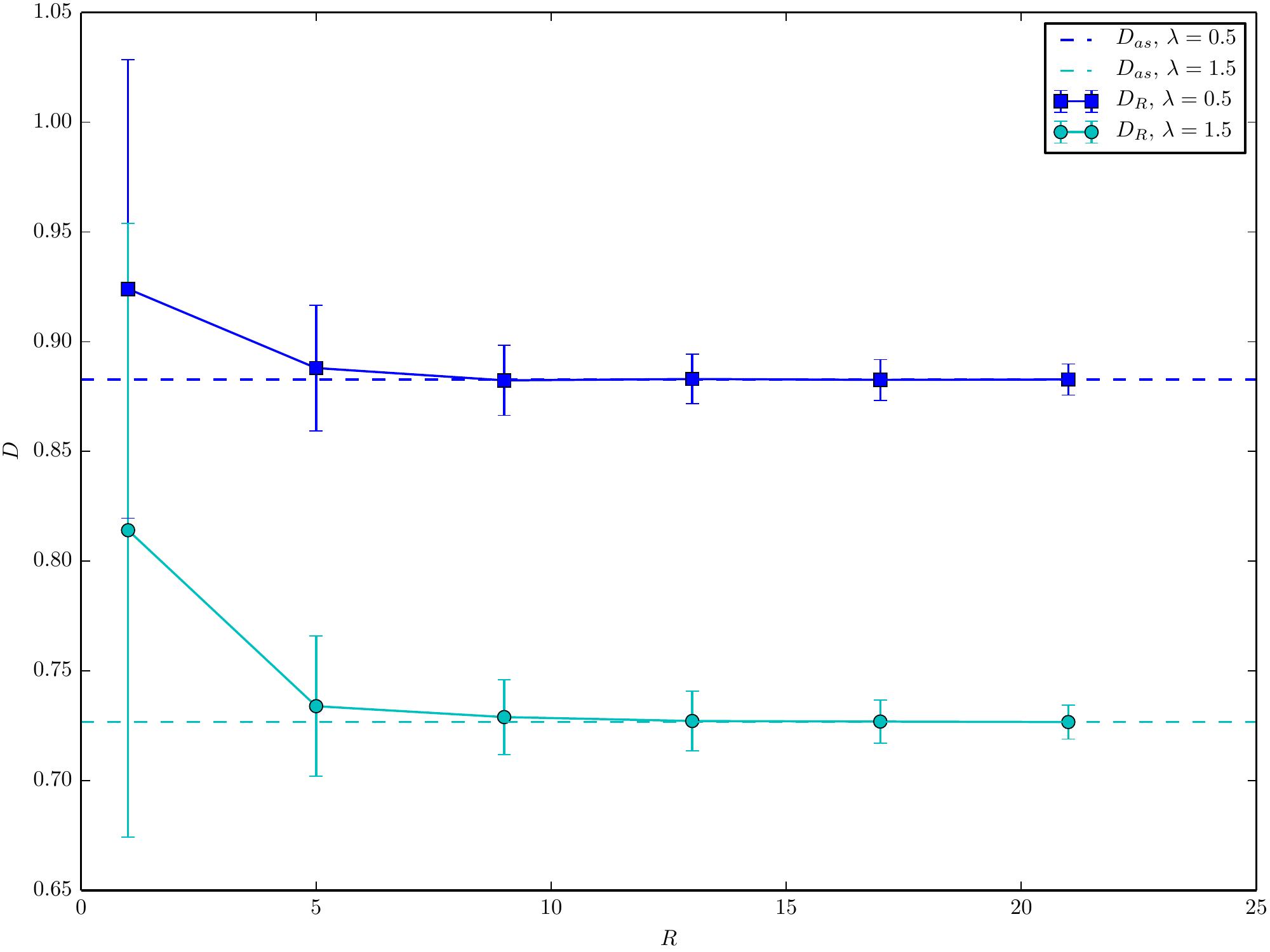}
\caption[]{Plots of the distribution of the first component $D_R(h)$ for varying $R$, for random protrusion surfaces with parameters $\lambda = 0.5, \alpha = 1.0$ (square markers) and $\lambda = 1.5, \alpha = 1.0$ (circle markers).  Error bars denote one standard-deviation of the distribution of $D_R(h)$, generated from $10^3$ surface realisations.  The dashed line indicates the value of the area-scaling estimate $\frac{1}{Z}$.}
\label{fig:ergodic_poisson_eff_diff}
\end{figure}

 \begin{figure}[htb]
\centering
	\includegraphics[scale=0.5]{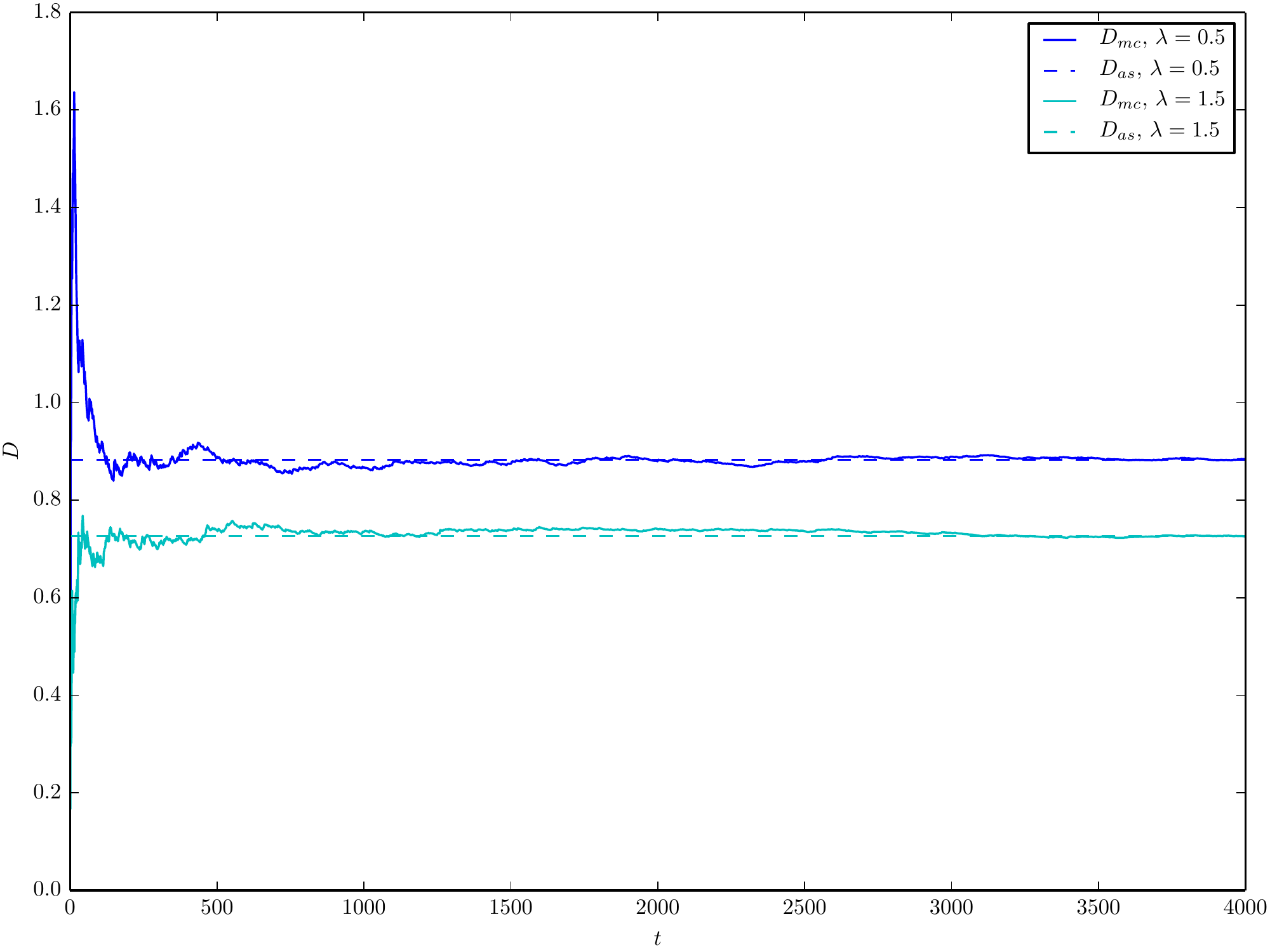} 
\caption[]{Plot of the first component of the macroscopic diffusion tensor computed from a long-time MCMC simulation of a Brownian motion on a single realisation of random protrusion surfaces with parameters $\lambda = 0.5$ and $\lambda = 1.5$ respectively.}
\label{fig:ergodic_mcmc_poisson}
\end{figure}

\subsection{Gaussian Random Field Surface}
\label{sec:ergodic_example2}
The second example we consider is a surface generated by a two-dimensional stationary Gaussian random field.  Due to the unbounded support of the random field fluctuations,  this case does not fall into the framework of this paper,  however, numerical experiments suggest that a homogenization limit does exist for lateral diffusion on such a surface and that the conclusions of Theorems \ref{thm:ergodic_homog_thm} and  \ref{thm:as} appear to still hold in this case.
\\\\
We consider an isotropic  Gaussian random field ${h}:\R^2 \rightarrow \R$ with mean zero and exponentially decaying autocorrelation given by $c_\alpha(r) = e^{-\pi\alpha\norm{r}^2}$, where $\alpha$ is a positive constant.  By  Bochner's theorem \cite[Theorem IX.9]{reed1975methods}, the function $c_\alpha(x-y)$ defines a covariance operator $C_\alpha$, and a Gaussian measure on $L^2(\R^d)$ with mean $0$ and covariance $C_\alpha$.  Moreover, by application of the Sobolev embedding theorem one can see that realisations of $h(x)$ have an almost-surely smooth modification.
\\
\begin{figure}[ht]
\includegraphics[width=\textwidth]{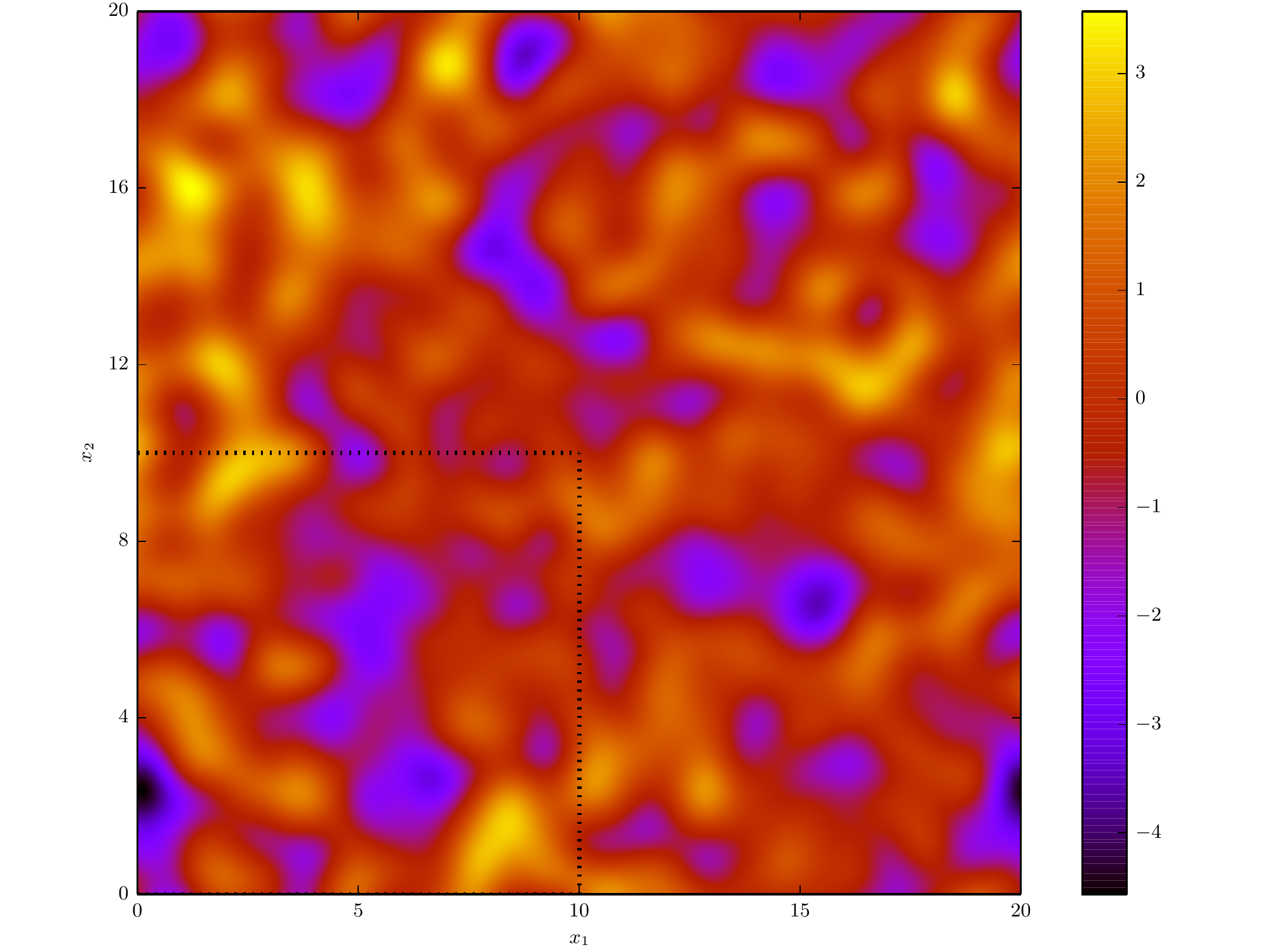}
\caption[Realisation of the surface generated by stationary Gaussian random field.]{A realisation of the Gaussian random field $h(x)$ using a truncated Karhunen-Loeve expansion,  with $\alpha = 1$, $M=1024$ and $R = 5$ (Note that the field has been translated periodically from $[-2R,2R]^2$ to $[0,4R]^2$).   The region enclosed by the dotted line is what is retained as a sample of ${h}(x)$.}
\label{fig:ergodic_grf}
\end{figure}

To simulate realisations of $h(x)$ over a domain $B_R = [-R,R]^2$, we make use of the Karhunen-Loeve expansion \cite[Chapter 3]{adler2007random} of the random field with respect to the standard Fourier basis in the space of periodic square-integrable functions on $[-2R,2R]^2$.  Given a realisation $h_{per}(x)$ in this space,  the random field $h(x)$ is then  approximated by $h(x) = \restr{h}{B_R}(x)$, provided $R$ is sufficiently large so that
\begin{equation}
	\label{eq:ergodic_decorrelation}
c_\alpha(r) \approx 0 \mbox{ for  } \norm{r} > R.
\end{equation}  
\\\\
For a given realisation of the surface $h(x)$ we use the periodic approximation scheme to compute $D_R(h)$ for varying $R$ and parameters $\alpha = 0.01$ and $\alpha = 0.1$, respectively.  As before,  for each value of $R$, $10^3$ realisations of the surface are generated and the periodic approximation $D_R$ computed for each realisation.   A starting mesh-size of $2^{-6}$ is used, refining globally until the relative error of $D_R(h)$ between successive refinements is $10^{-2}$.  As $R$ increases,  the variance of the samples of $D_R$ decreases, the ergodic average converges very quickly.  Indeed for $R \geq 10$ the ergodic average converges to the mean after only $50$ iterations.   As noted in the previous example however, this comes at the cost of requiring smaller mesh-sizes to maintain a constant  error for the finite element approximation as $R$ increases. In Figure \ref{fig:ergodic_grf_eff_diff}, for each $R$ we plot the average value of first component of $D_R(h)$.  We note that there is good agreement between the mean value of $D_R(h)$ and the effective diffusion tensor predicted by the area scaling approximation for large values of $R$. 
\\\\
In Figure \ref{fig:ergodic_mcmc_gaussian} we compare the area-scaling estimate with the first component of the macroscopic diffusion tensor $D_{mcmc}$ computed from a long-time simulation of Brownian motion on a single realisation of the Gaussian random surface, directly simulated from the SDE (\ref{eq:ergodic_sde}) using an Euler-Maruyama discretisation, with timestep $10^{-7}$.  Once again,  the macroscopic diffusion tensor is well approximated by the area-scaling approximation.  The results plotted in Figures \ref{fig:ergodic_mcmc_gaussian} and \ref{fig:ergodic_grf_eff_diff} suggest that the conclusions of Theorems \ref{thm:ergodic_homog_thm}, \ref{thm:ergodic_isotropic} and  \ref{thm:as} appear to hold true for the case of a Gaussian random field despite the fact that the homogenization theorem requires the assumption uniform bounds on the field and its derivatives.

\begin{figure}[htb]
\centering
\includegraphics[scale=0.5]{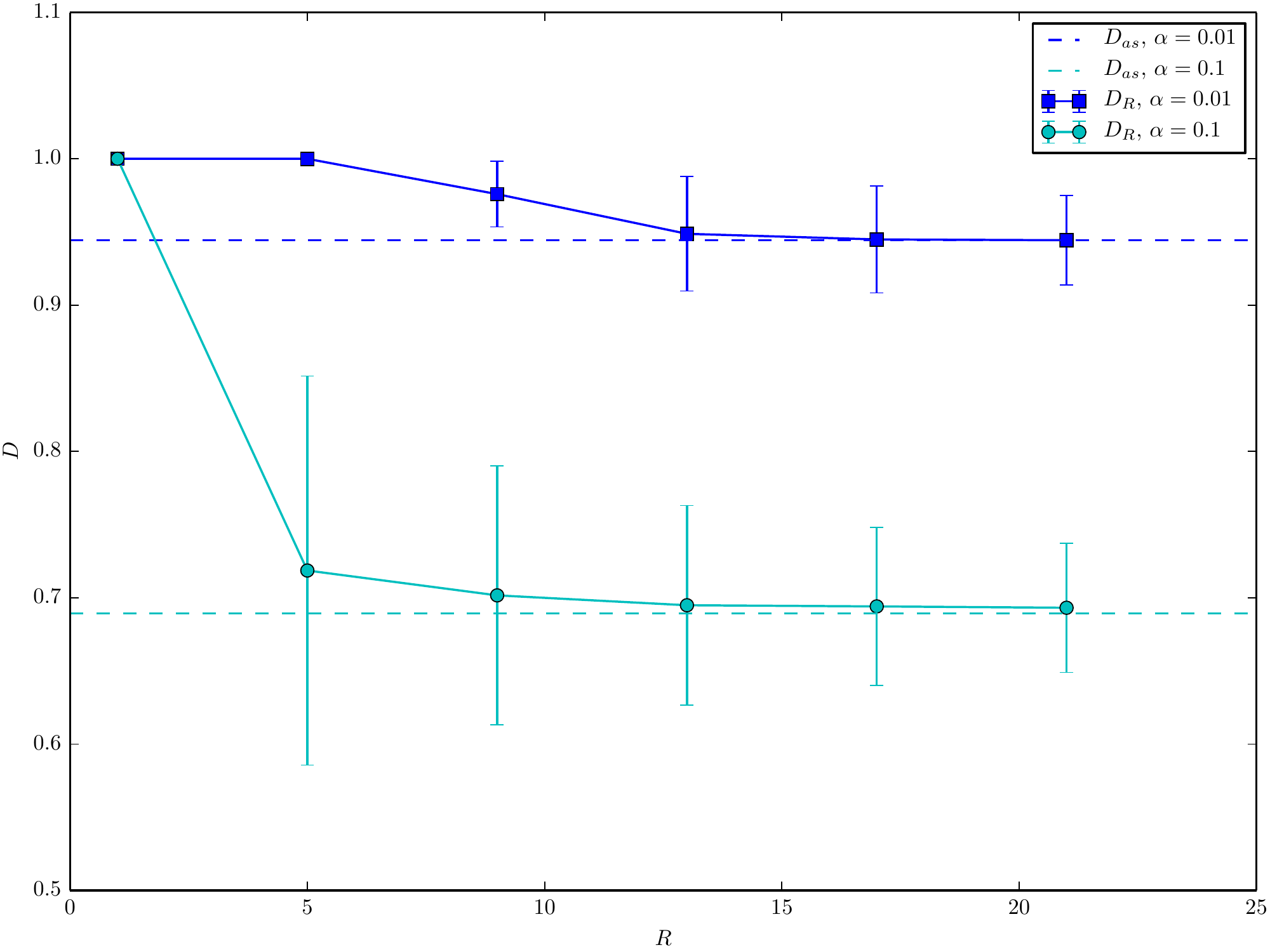}
\caption[]{A plot of the distribution of $D_R(h)$ for increasing values of $R$, for the Gaussian random field surface, with parameters $\alpha = 0.01$ (square markers) and $\alpha = 0.1$ (circle markers) respectively.   For each value of $R$, $10^3$ realisations were generated.  The error bars denote one standard deviation.  The dashed line indicates the  value of the area scaling estimate $D_{as}$ given by $\frac{1}{Z}$.}
\label{fig:ergodic_grf_eff_diff}
\end{figure}

 \begin{figure}[htb]
\centering
	\includegraphics[scale=0.5]{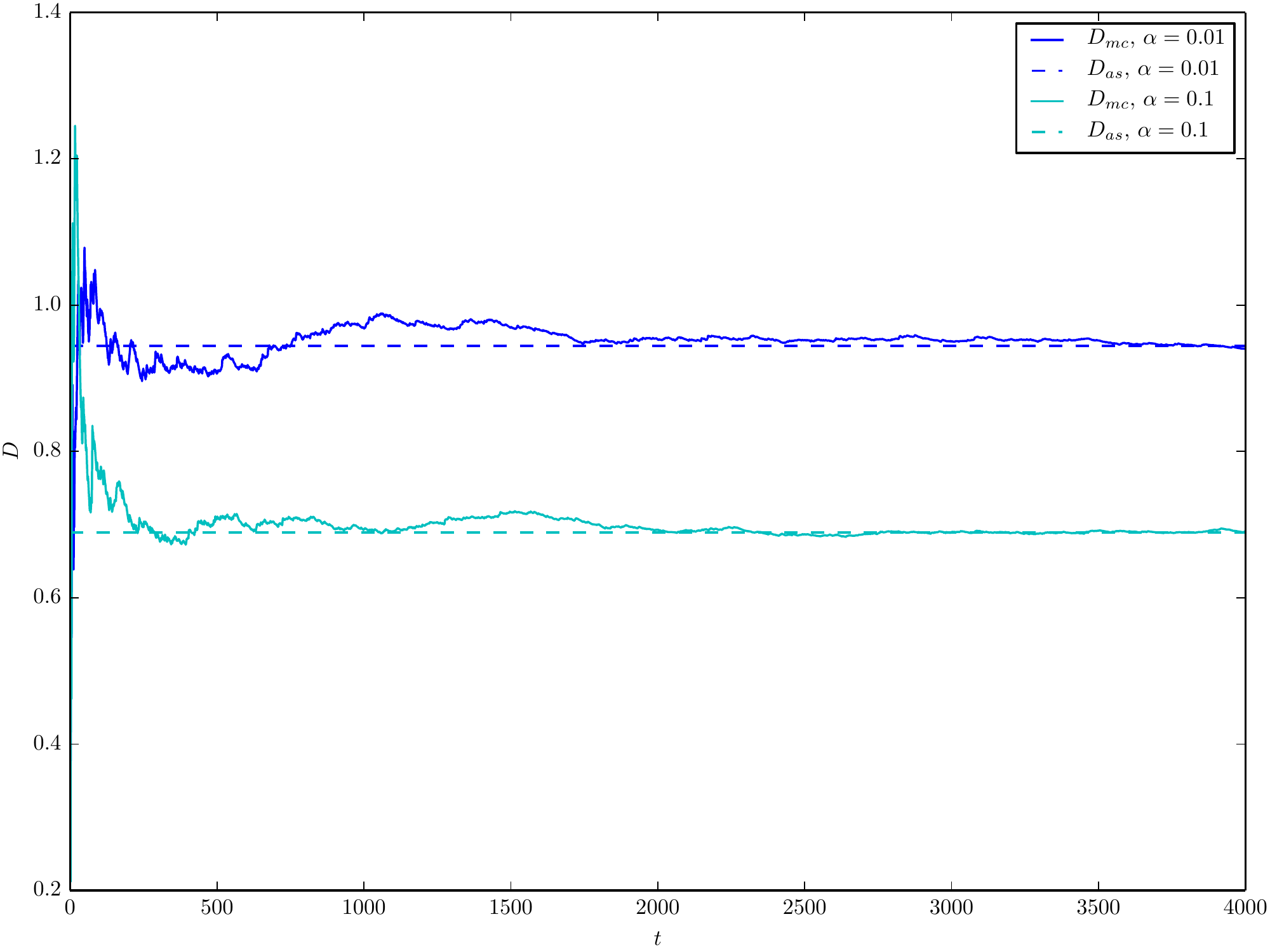} 
\caption[Macroscopic diffusion tensor computed from a long-term simulation of a Brownian motion on a realisation of the Gaussian random surface for $\lambda = 0.5$ and $\lambda = 1.5$.]{Plot of the macroscopic diffusion tensor computed from a long-time simulation of a Brownian motion on a realisation of the Gaussian random surface with parameters $\alpha = 0.01$ and $\alpha = 0.1$, respectively. }
\label{fig:ergodic_mcmc_gaussian}
\end{figure}

\section{Conclusion}
\label{sec:conclusion}
In this paper we have studied the problem of diffusion on a quasi-planar surface defined by a random field which is stationary and ergodic with respect to spatial translations.  We have shown that the problem of computing the effective dynamics can be expressed as a stochastic homogenization problem,  and subject to suitable conditions on the random field, we have applied standard results to show that the lateral diffusion process is well-approximated by a Brownian motion on the plane, with constant effective diffusion tensor $D$, independent of the particular surface realisation.  Although $D$ does not generally have a closed form, we have been able to identify a number of properties of the effective diffusion tensor.  In particular, we have obtained variational bounds on $D$,  showing that it is depleted with respect to the microscopic diffusion tensor.  Moreover,  we have been able to show that for two dimensional surfaces, the area scaling approximation $D = \frac{1}{Z}$ holds for isotropic $D$, and provided a natural sufficient condition on the random field for $D$ to be isotropic.  We have also described a practical numerical scheme to approximate the effective diffusion tensor using a periodic approximation,  and used this method to consider two very simple examples.
\\\\
The macroscopic behaviour of lateral diffusion on static surfaces with random fluctuations has been studied before in the context of modelling protein diffusion on Helfrich-elastic surfaces with quenched fluctuations, \cite{naji2007diffusion,gustafsson1997diffusion,duncan2013multiscale}.  However, these papers have all assumed that the random surface is periodic in each direction, with period length $L$ characterising the macroscopic length scale of the model. The long-time/macroscopic limit, computed via periodic homogenization, captures the dynamics of the particle which diffuses over a periodic repetition of a single realisation of the random surface.  The results in this paper characterize the macroscopic behaviour of diffusion on random surfaces without imposing any such periodization.  The existence of a homogenization limit is entirely due to the stationarity and ergodicity of the random surface and not any imposed periodicity.  Moreover, the resulting macroscopic limit depends only on the quantitative statistical properties of the random field, and independent of the particular surface realisation (unlike in the periodic case).   
\\\\
There are several extensions to the present work.  Clearly, as in the periodic case, it would be interesting to study the more general problem where the surface possesses a slowly varying component, and the rapid fluctuations occur normally to this slow surface.  The problem of finding the effective behaviour would result in a locally-stationary homogenisation problem as was considered in \cite{rhodes2009homogenization}.     More generally, it would be interesting to extend the approach to study more general surfaces, possibly even closed surfaces embedded in $R^3$ , which to our knowledge has not been previously considered.
\\\\
Another direction of interest would be to relax Assumption $\mathbf{D}$, namely the requirement that realisations of the field and its derivatives must be uniformly bounded.  Relaxing this assumption would permit one to obtain analytical results for Gaussian random fields.  Removing Assumption  $\mathbf{D}$ introduces several technical issues:  the crux of the problem lies in the fact that the drift of the SDE (\ref{eq:ergodic_sde_rescaled}) is no longer bounded, and the diffusion term no longer remains uniformly elliptic.  The issue of unbounded coefficients might be resolvable by adopting an approach similar to \cite{avellaneda1991integral, oelschlager1988homogenization,komorowski2003sector}, however it is still unclear how to handle the lack of ellipticity.   Nonetheless, numerical results suggest that a homogenization limit for Gaussian random fields exists,  and thus we believe that it is possible to obtain a homogenization result for such surfaces and leave the problem of proving this rigorously as scope for future work. 

\section*{Acknowledgements}
The author is grateful to Grigorios Pavliotis and Andrew Stuart for useful suggestions and comments.  Moreover,  the author wishes to acknowledge  EPSRC for financial support and thanks the Centre for Scientific Computing at Warwick for computational resources. 

\bibliographystyle{plain}
\bibliography{refs}
\end{document}